\newtheorem{theorem}{Theorem}
\newtheorem{definition}{Definition}
\newtheorem{remark}{Remark}
\newtheorem{proposition}{Proposition}
\newtheorem{lemma}{Lemma}
\newtheorem{corollary}{Corollary}
\title{Generalized Fej\'er-Hermite-Hadamard type via generalized $(h-m)$-convexity on fractal sets and applications}
\author{Ohud Almutairi$^{1}$ and Adem Kiliçman$^{2}$  \\Department of Mathematics, University of Hafr Al-Batin, Hafr Al-Batin 31991,Sudia Arabia.\\ Universiti Putra Malaysia, Serdang 43400 UPM, Selangor, Malaysia\\ $^{1}$OhudbAlmutairi@gmail.com	and $^{2}$akilic@upm.edu.my}
\date{\today}
\begin{document}
	\maketitle
	
\noindent \textbf{Abstract}
In this article, we define a new class of convexity called generalized $(h-m)$-convexity, which generalizes $h$-convexity and $m$-convexity on fractal set $\mathbb{R}^{\alpha}$ $(0<\alpha\leq 1)$. Some properties of this new class are discussed. Using local fractional integrals and generalized $(h-m)$-convexity, we generalized Hermite-Hadamard (H-H) and Fej\'er-Hermite-Hadamard (Fej\'er-H-H) types inequalities. We also obtained a new result of the Fej\'er-H-H type for the function whose derivative in absolute value is the generalized $(h-m)$-convexity on fractal sets. As applications, we studied some new inequalities for random variables and numerical integrations.

\noindent \textbf{Keywords}: Fractal set; Generalized $(h-m)$-convexity; Hermite-Hadamard inequality;  Fej\'er-Hermite-Hadamard inequality; local fractional integral.

\section{Introduction}

The H-H inequality plays essential roles in different areas of sciences, such as mathematics, physics and engineering (for example see \cite{Dragomir(2003),A2020,set(2021),DA(1998),Am(2019)}). This inequality provides estimates for the mean value of a continuous convex function. Therefore, the classical H-H inequality can be defined as follows.

\begin{theorem}
Let $\mathcal{G}:[\nu,\mu]\subseteq\mathbb{R}\to \mathbb{R}$ be a convex function on $[\nu,\mu]$ with $\nu<\mu$, then

\begin{equation}
\mathcal{G}\left(\frac{\nu+\mu}{2}\right) \leq \frac{1}{\mu-\nu}\int_{\nu}^{\nu} \mathcal{G}(x) d x \leq \frac{\mathcal{G}(\nu)+\mathcal{G}(\mu)}{2}\label{hh}
\end{equation}
holds.
\end{theorem}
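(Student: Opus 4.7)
The plan is to prove the two sides of \eqref{hh} separately, in each case by reducing the integral over $[\nu,\mu]$ to an integral over $[0,1]$ via the affine substitution $x = (1-t)\nu + t\mu$, so that the defining inequality of convexity can be applied pointwise in $t$ and then integrated.

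For the right-hand inequality, I would first write an arbitrary point $x \in [\nu,\mu]$ as $x = (1-t)\nu + t\mu$ with $t = (x-\nu)/(\mu-\nu) \in [0,1]$. Convexity of $\mathcal{G}$ then yields $\mathcal{G}((1-t)\nu + t\mu) \le (1-t)\mathcal{G}(\nu) + t\mathcal{G}(\mu)$. Integrating in $t$ over $[0,1]$ and using the change of variables $dx = (\mu-\nu)\,dt$ in the integral on the left of \eqref{hh} gives
\[
\frac{1}{\mu-\nu}\int_\nu^\mu \mathcal{G}(x)\,dx \;=\; \int_0^1 \mathcal{G}((1-t)\nu + t\mu)\,dt \;\le\; \int_0^1 \bigl[(1-t)\mathcal{G}(\nu) + t\mathcal{G}(\mu)\bigr]\,dt \;=\; \frac{\mathcal{G}(\nu)+\mathcal{G}(\mu)}{2}.
\]

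For the left-hand inequality, the standard trick is to pair each point $u = (1-t)\nu + t\mu$ with its reflection $v = t\nu + (1-t)\mu$ across the midpoint; then $(u+v)/2 = (\nu+\mu)/2$, so convexity gives $\mathcal{G}\bigl(\tfrac{\nu+\mu}{2}\bigr) \le \tfrac{1}{2}\bigl[\mathcal{G}(u) + \mathcal{G}(v)\bigr]$. Integrating in $t$ over $[0,1]$, the two resulting integrals are each equal (by the change of variables above, noting invariance under $t \mapsto 1-t$) to $\frac{1}{\mu-\nu}\int_\nu^\mu \mathcal{G}(x)\,dx$, which delivers the desired bound.

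No step here is really an obstacle: convexity in the form $\mathcal{G}((1-t)a + tb) \le (1-t)\mathcal{G}(a)+t\mathcal{G}(b)$ is exactly what is needed, and the only subtlety is to justify the affine change of variables, which is routine provided $\mathcal{G}$ is integrable on $[\nu,\mu]$; since a real-valued convex function on a closed bounded interval is automatically continuous on the interior and bounded, Riemann integrability is not in question. (An alternative route for the left inequality, which avoids the reflection trick, is to invoke a supporting line $\mathcal{G}(x) \ge \mathcal{G}(c) + s(x-c)$ at the midpoint $c = (\nu+\mu)/2$ and integrate, using that $\int_\nu^\mu (x-c)\,dx = 0$; I would mention this only as a remark.)
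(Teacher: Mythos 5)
The paper never proves this theorem: it is quoted in the introduction as the classical Hermite--Hadamard inequality, with references to the literature, so there is no in-paper proof to compare against. Judged on its own, your proof is the standard one and is correct. The right-hand half (apply $\mathcal{G}((1-t)\nu+t\mu)\le(1-t)\mathcal{G}(\nu)+t\mathcal{G}(\mu)$ and integrate over $t\in[0,1]$) and the left-hand half (pair $u=(1-t)\nu+t\mu$ with its reflection $v=t\nu+(1-t)\mu$, use $\mathcal{G}\bigl(\tfrac{\nu+\mu}{2}\bigr)\le\tfrac{1}{2}\bigl[\mathcal{G}(u)+\mathcal{G}(v)\bigr]$, and integrate) are both complete, and your integrability remark is sound: a finite convex function on $[\nu,\mu]$ is bounded above by $\max\{\mathcal{G}(\nu),\mathcal{G}(\mu)\}$ along chords, bounded below by any supporting line at an interior point, and continuous except possibly at the two endpoints, hence Riemann integrable. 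Two observations worth recording. First, the statement as printed contains a typo --- the middle term of \eqref{hh} should read $\frac{1}{\mu-\nu}\int_{\nu}^{\mu}\mathcal{G}(x)\,dx$, with upper limit $\mu$ rather than $\nu$ --- and your proof tacitly (and correctly) works with the corrected version. Second, although the paper omits a proof here, your scheme is exactly the one it deploys later for the generalized fractal version in Theorem \ref{theoremhh}: there the substitutions $y=\gamma\nu+(1-\gamma)\mu$ and $x=(1-\gamma)\nu+\gamma\mu$ fed into the midpoint inequality and integrated over $\gamma\in[0,1]$ reproduce your reflection pairing, and the upper bound is likewise obtained by integrating the defining convexity inequality, so your argument is the $\alpha=1$, $m=1$, $h(\gamma)=\gamma$ specialization of the paper's own method rather than a genuinely different route.
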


Furthermore, the weighted generalization of inequality (\ref{hh}) is provided by Fej\'er \cite{Feje} as follows.

\begin{theorem}\label{fejetheorem}
Let $\mathcal{G}:[\nu,\mu]\subseteq\mathbb{R}\to \mathbb{R}$ be a convex function with $\nu<\mu$ and \(\mathcal{W}:[\nu, \mu] \rightarrow \mathbb{R}\) be an integrable, a non-negative and symmetric function with
respect to \((\nu+\mu) / 2\), then the inequality

\begin{equation}
\mathcal{G}\left(\frac{\nu+\mu}{2}\right)\int_{\nu}^{\mu} \mathcal{W}(x) d x \leq \int_{\nu}^{\mu} \mathcal{G}(x)\mathcal{W}(x) d x \leq \frac{\mathcal{G}(\nu)+\mathcal{G}(\mu)}{2}\int_{\nu}^{\mu} \mathcal{W}(x) d x\label{classicalfeje}
\end{equation}
holds.	
\end{theorem}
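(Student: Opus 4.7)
The plan is to prove both inequalities of \eqref{classicalfeje} by invoking the definition of convexity for $\mathcal{G}$ at two different configurations of points, multiplying by the non-negative weight $\mathcal{W}$, integrating, and then exploiting the symmetry $\mathcal{W}(\nu+\mu-x)=\mathcal{W}(x)$ via the reflection substitution $u=\nu+\mu-x$ to identify the resulting integrals.

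For the left-hand inequality, I would start from the identity $\frac{\nu+\mu}{2}=\frac{x+(\nu+\mu-x)}{2}$, which shows that the midpoint is the average of every point $x\in[\nu,\mu]$ and its reflection. Convexity of $\mathcal{G}$ then yields $\mathcal{G}\!\left(\frac{\nu+\mu}{2}\right)\le\frac{1}{2}\bigl[\mathcal{G}(x)+\mathcal{G}(\nu+\mu-x)\bigr]$. Multiplying by $\mathcal{W}(x)\ge 0$ and integrating over $[\nu,\mu]$ produces two integrals on the right; in the reflected one I would apply the change of variable $u=\nu+\mu-x$ and use the symmetry of $\mathcal{W}$ to rewrite it as $\int_{\nu}^{\mu}\mathcal{G}(u)\mathcal{W}(u)\,du$. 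The two pieces collapse into a single copy of $\int_{\nu}^{\mu}\mathcal{G}(x)\mathcal{W}(x)\,dx$, giving the lower bound.

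For the right-hand inequality, I would parametrize each $x\in[\nu,\mu]$ as $x=(1-t)\nu+t\mu$ with $t=\frac{x-\nu}{\mu-\nu}$, so that convexity gives the affine bound $\mathcal{G}(x)\le\frac{\mu-x}{\mu-\nu}\mathcal{G}(\nu)+\frac{x-\nu}{\mu-\nu}\mathcal{G}(\mu)$. Multiplying by $\mathcal{W}(x)$ and integrating reduces the claim to evaluating $\int_{\nu}^{\mu}(\mu-x)\mathcal{W}(x)\,dx$ and $\int_{\nu}^{\mu}(x-\nu)\mathcal{W}(x)\,dx$. The same reflection substitution $u=\nu+\mu-x$, combined with the symmetry of $\mathcal{W}$, shows these two integrals are equal; since their sum equals $(\mu-\nu)\int_{\nu}^{\mu}\mathcal{W}(x)\,dx$, each equals $\frac{\mu-\nu}{2}\int_{\nu}^{\mu}\mathcal{W}(x)\,dx$, and the upper bound falls out immediately.

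There is no serious obstacle here; the only item that needs care is bookkeeping the two uses of symmetry, namely that convexity is applied pointwise to $\mathcal{G}$ while the symmetry of $\mathcal{W}$ is applied only after integration to match reflected integrals with their originals. Non-negativity of $\mathcal{W}$ is essential to preserve the direction of the inequalities when integrating the pointwise convexity bounds.
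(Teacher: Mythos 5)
Your proposal is correct, but note first that the paper contains no proof of this statement to compare against: Theorem \ref{fejetheorem} is quoted as Fej\'er's classical weighted Hermite--Hadamard inequality with a citation to \cite{Feje}, and the authors use it only as background. Judged on its own, your argument is the standard reflection proof and it goes through exactly as you describe. For the left inequality, $\mathcal{G}\left(\frac{\nu+\mu}{2}\right)\leq\frac{1}{2}\left[\mathcal{G}(x)+\mathcal{G}(\nu+\mu-x)\right]$ multiplied by $\mathcal{W}(x)\geq 0$, integrated, and with the substitution $u=\nu+\mu-x$ together with $\mathcal{W}(\nu+\mu-u)=\mathcal{W}(u)$ in the reflected term, does collapse the two integrals into one copy of $\int_{\nu}^{\mu}\mathcal{G}(x)\mathcal{W}(x)\,dx$. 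For the right inequality, the chord bound $\mathcal{G}(x)\leq\frac{\mu-x}{\mu-\nu}\mathcal{G}(\nu)+\frac{x-\nu}{\mu-\nu}\mathcal{G}(\mu)$ plus the observation that the reflection exchanges $\int_{\nu}^{\mu}(\mu-x)\mathcal{W}(x)\,dx$ and $\int_{\nu}^{\mu}(x-\nu)\mathcal{W}(x)\,dx$, forcing each to equal $\frac{\mu-\nu}{2}\int_{\nu}^{\mu}\mathcal{W}(x)\,dx$, is exactly right; this is also where your bookkeeping remark is apt, since symmetry of $\mathcal{W}$ enters only through these integral identities, never pointwise against $\mathcal{G}$. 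Two small points worth making explicit if you write this up: the integrability of the product $\mathcal{G}\mathcal{W}$ is implicit in the statement but is unproblematic, since a finite convex function on the compact interval $[\nu,\mu]$ is bounded (above by $\max\{\mathcal{G}(\nu),\mathcal{G}(\mu)\}$ via the chord inequality, below by a supporting line at an interior point) and measurable; and your proof shows the left inequality genuinely needs the symmetry of $\mathcal{W}$ while the right one uses it only to split the weight evenly between the endpoints --- without symmetry the right-hand bound would instead involve the weighted barycenter of $\mathcal{W}$.
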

\begin{remark}
In Theorem \ref{fejetheorem}, choosing \(\mathcal{W}(x)=1\) reduces inequality (\ref{classicalfeje}) to inequality (\ref{hh}).
\end{remark}

Due to the enormous importance of inequalities (\ref{hh}) and (\ref{classicalfeje}), many generalizations of such inequalities involving a variant types of convexities have been investigated \cite{Dragomir2020,Ozcan(2019),setb2020,Al2020}. For more interesting results, one can consult the following references \cite{Almutairi2019,Qi2020,Sarikaya2013,A2019,Sarikaya(2008)}.

One important class of convexities is $m$-convexity which was studied by Toader \cite{Toader(1985)}, and is presented as follows.

\begin{definition}
	A function \(\mathcal{G}:[0, b] \rightarrow \mathbb{R}\) is called $m$-convex, where \(m \in[0,1]\), if the following inequality
	\begin{equation*}
	\mathcal{G}(\gamma \nu+m(1-\gamma) \mu) \leq \gamma \mathcal{G}(\nu)+m(1-\gamma) \mathcal{G}(\mu)
	\end{equation*}
	holds for every \(\nu, \mu \in[0, b]\) and \(\gamma \in[0,1]\).
\end{definition}
We say that \(\mathcal{G}\) is $m-$concave if \(-\mathcal{G}\) is $m$-convex.

In addition, Dragomir established H-H type inequalities for $m$-convexity \cite{Dragomir(2002)}, and we present the result as follows:

\begin{theorem}
Suppose that \(\mathcal{G}:[0, \infty) \rightarrow \mathbb{R}\) is $m$-convex, where $m \in(0,1]$ if
	\(\mathcal{G} \in L_{1}[\nu m, \mu]\) with \(0 \leq \nu<\mu<\infty,\) then we obtain
\begin{equation}
\frac{1}{m+1}\left[\int_{\nu}^{m \mu} \mathcal{G}(x) d x+\frac{m \mu-\nu}{\mu-m \nu} \int_{m \nu}^{\mu} \mathcal{G}(x) d x\right] \leq(m \mu-\nu) \frac{ \mathcal{G}(\nu)+ \mathcal{G}(\mu)}{2}.\label{mdragomir}
\end{equation}
\end{theorem}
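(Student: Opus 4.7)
The plan is to apply the defining $m$-convexity inequality twice, with the roles of $\nu$ and $\mu$ swapped, integrate each copy against $\gamma\in[0,1]$, perform an affine change of variables to recognize the two integrals $\int_\nu^{m\mu}\mathcal{G}$ and $\int_{m\nu}^\mu\mathcal{G}$ appearing on the left-hand side, and finally combine the two resulting estimates with a carefully chosen positive weight so that their right-hand sides collapse into the symmetric quantity $\mathcal{G}(\nu)+\mathcal{G}(\mu)$.

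For the first half, I would start from
\[
\mathcal{G}(\gamma\nu+m(1-\gamma)\mu)\le\gamma\mathcal{G}(\nu)+m(1-\gamma)\mathcal{G}(\mu),
\]
integrate in $\gamma$ over $[0,1]$, and apply the substitution $x=\gamma\nu+m(1-\gamma)\mu$ (valid in the regime $\nu<m\mu$, which is the one that makes the first integral in the statement non-degenerate) to obtain
\[
\frac{1}{m\mu-\nu}\int_\nu^{m\mu}\mathcal{G}(x)\,dx\le\frac{\mathcal{G}(\nu)+m\mathcal{G}(\mu)}{2}.
\]
Swapping $\nu$ and $\mu$ in the defining inequality, integrating, and then substituting $x=(1-\gamma)\mu+m\gamma\nu$ gives the symmetric companion
\[
\frac{1}{\mu-m\nu}\int_{m\nu}^\mu\mathcal{G}(x)\,dx\le\frac{m\mathcal{G}(\nu)+\mathcal{G}(\mu)}{2}.
\]

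The final step is to merge these two estimates. Multiplying the first inequality by $(m\mu-\nu)$ and the second by $(m\mu-\nu)$ as well (which produces the factor $\frac{m\mu-\nu}{\mu-m\nu}$ in front of $\int_{m\nu}^\mu\mathcal{G}$), then adding, the bracket on the right becomes $(1+m)\frac{\mathcal{G}(\nu)+\mathcal{G}(\mu)}{2}$ because the coefficients of $\mathcal{G}(\nu)$ and $\mathcal{G}(\mu)$ happen to add to the common value $1+m$. Dividing by $m+1$ yields exactly the claimed bound. The main obstacle is spotting the asymmetric weight $\frac{m\mu-\nu}{\mu-m\nu}$ in advance: without it, the mixed terms $\gamma\mathcal{G}(\nu)+m(1-\gamma)\mathcal{G}(\mu)$ and its partner do not share a common pre-factor, and one cannot cancel the outer $1/(m+1)$. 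Once this weighting is identified, the rest is routine integration and bookkeeping.
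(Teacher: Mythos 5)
Your proposal is correct and is essentially the paper's own route: the paper obtains this inequality as the $\alpha=1$, $h(\gamma)=\gamma$ specialization of its generalized $(h-m)$-convexity theorem (inequality (\ref{parhtheor})), whose proof likewise applies the convexity inequality along the two chords from $\nu$ to $m\mu$ and from $m\nu$ to $\mu$, integrates over $\gamma\in[0,1]$ with the same affine substitutions, and adds -- the paper merely symmetrizes with four inequalities in $\gamma$ and $1-\gamma$ (needed for general $h$), which for $h(\gamma)=\gamma$ just duplicates your two, and your multiply-by-$(m\mu-\nu)$ bookkeeping matches its final normalization. Your caveat that the argument needs $\nu<m\mu$ so that the weight $m\mu-\nu$ is positive is a fair point that the paper's statement leaves implicit.
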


Many studies have been conducted to extend and generalize $m$-convexity along with the related inequalities of H-H and Fej\'e-H-H types. For more studies, one can consult \cite{M(2020)} for exponentially $m$-convexity, \cite{Ozdemir(2016)} for $(h-m)$-convexity and \cite{Ozcan(2020)} for $(s-m)$-convexity. These types of convexities together with such inequalities can be extended to the fractional theory.

Fractional calculus is considered as an important area of study due to its wide-range applications to solve many real world problems. This can be seen in modelling by means
of fractals, control theory and random walk process \cite{Atangana2020,Da2020,K2020}. Following this, the theory of local fractional calculus have been used to generalize H-H and Fej\'e-H-H types inequalities for generalized $m$-convex function and other classes of convexities on fractal sets, such as Anastassiou et al. \cite{Anastassiou(2019)} for generalized strongly $m$-convex mapping. Other examples include the work of Abdeljwad et al. \cite{Abdeljawad(2020)} on generalized $(s, m) $-convex functions, Mo et al. \cite{Moetal(2014)} on generalized convex function, \"Ozcan et al. \cite{Ozcan(2019)} on $(\alpha-m)$-convexity, Du et al. \cite{Du(2019)} on generalized $m$-convex function and Luo et al. \cite{Luo(2020)} on generalized $h$-convex function.

Motivated by the above works, this study is therefore devoted to generalize local fractional inequalities of H-H and Fej\'er H-H types involving new class of convexity called generalized $(h-m)$-convex on fractal sets. We further present some properties of this new class. The relation between this class and earlier classes are presented here. New generalized inequalities of H-H and Fej\'er-H-H types for generalized $(h-m)$-convexity are also obtained. We  extended Fej\'e-H-H type inequalities for the class of mapping whose local fractional derivative in absolute value at a certain power is generalized $(h-m)$-convex. We applied our result to construct new inequalities for random variables and numerical integrations.

\section{Preliminaries}

This section introduces some known results involving the local fractional integrals. The theory of fractional set was proposed by Yang \cite{Yang(2012)}, and we present it as follows.

For \(0<\alpha \leq 1\), if \(\nu^{\alpha}, \mu^{\alpha}\) and \(\kappa^{\alpha}\) belong to the set \(\mathbb{R}^{\alpha}\) of real line numbers, then
\begin{itemize}
	\item [1.] \(\nu^{\alpha}+\mu^{\alpha}\) and \(\nu^{\alpha} \mu^{\alpha}\) belong to the set \(\mathbb{R}^{\alpha}\);
\item [2.]	\(\nu^{\alpha}+\mu^{\alpha}=\mu^{\alpha}+\nu^{\alpha}=(\nu+\mu)^{\alpha}=(\mu+\nu)^{\alpha}\);
\item [3.]\(\nu^{\alpha}+\left(\mu^{\alpha}+\kappa^{\alpha}\right)=(\nu+\mu)^{\alpha}+\kappa^{\alpha}\);
\item [4.]\(\nu^{\alpha} \mu^{\alpha}=\mu^{\alpha} \nu^{\alpha}=(\nu \mu)^{\alpha}=(\mu \nu)^{\alpha}\);
\item [5.]\(\nu^{\alpha}\left(\mu^{\alpha} \kappa^{\alpha}\right)=\left(\nu^{\alpha} \mu^{\alpha}\right) \kappa^{\alpha}\);
\item [6.]\(\nu^{\alpha}\left(\mu^{\alpha}+\kappa^{\alpha}\right)=\nu^{\alpha} \mu^{\alpha}+\nu^{\alpha} \kappa^{\alpha}\);
\item [7.]\(\nu^{\alpha}+0^{\alpha}=0^{\alpha}+\nu^{\alpha}=\nu^{\alpha}\) and \(\nu^{\alpha} 1^{\alpha}=1^{\alpha} \nu^{\alpha}=\nu^{\alpha}\).
\end{itemize}

In order to present the definition of local fractional integral on \(\mathbb{R}^{\alpha}\), the definition of the local fractional continuity is introduced as follows.

\begin{definition}
 A non-differentiable mapping \(\mathcal{G}: \mathbb{R} \rightarrow \mathbb{R}^{\alpha}, \zeta \rightarrow\)
	\(\mathcal{G}(\zeta)\) is named local fractional continuous at \(\zeta_{0},\) (or that \(\mathcal{G}(\zeta) \in C_{\alpha}(\nu, \mu)\)) if there exists
	\begin{equation*}
\left|\mathcal{G}(\zeta)-\mathcal{G}\left(\zeta_{0}\right)\right|<\varepsilon^{\alpha},
\end{equation*}	
with \(\left|\zeta-\zeta_{0}\right|<\varrho\) for any \(\varrho,\varepsilon>0\).

\end{definition}

Now, we give the definition of the local fractional  integral as follows.

\begin{definition}
 Let \(\mathcal{G}(x) \in C_{\alpha}[\nu, \mu] .\) Then the local fractional integral is
	defined by
	\begin{equation*}
	{ }_{\nu} \mathcal{I}_{\mu}^{\alpha} \mathcal{G}(x)=\frac{1}{\Gamma(\alpha+1)} \int_{\nu}^{\mu} \mathcal{G}(\lambda)(d \lambda)^{\alpha}=\frac{1}{\Gamma(\alpha+1)} \lim _{\Delta \lambda \rightarrow 0} \sum_{j=0}^{N-1} \mathcal{G}\left(\lambda_{j}\right)\left(\Delta \lambda_{j}\right)^{\alpha},
	\end{equation*}
	with \(\Delta \lambda_{j}=\lambda_{j+1}-\lambda_{j}\) and \(\Delta \lambda=\max \left\{\Delta \lambda_{1}, \Delta \lambda_{2}, \ldots, \Delta \lambda_{N-1}\right\},\) where \(\left[\lambda_{j}, \lambda_{j+1}\right], j=\)
	\(0, \ldots, N-1\) and \(\nu=\lambda_{0}<\lambda_{1}<\cdots<\lambda_{N-1}<\lambda_{N}=\mu\) is a partition of \([\nu, \mu]\).
\end{definition}
Here, it follows that \({ }_{\nu} \mathcal{I}_{\mu}^{\alpha} \mathcal{G}(x)=0\) if \(\nu=\mu\) and \({ }_{\nu} \mathcal{I}_{\mu}^{\alpha} \mathcal{G}(x)=-{ }_{\mu} I_{\nu}^{\alpha} \mathcal{G}(x)\) if \(\nu<\mu.\) If
for any \(x \in[\nu, \mu],\) there exists \({ }_{\nu} \mathcal{I}_{x}^{\alpha} \mathcal{G}(x),\) then we denote it by \(\mathcal{G}(x) \in \mathcal{I}_{x}^{\alpha}[\nu, \mu].\)


Yang \cite{Yang(2012)} established generalized H\"older's inequality by considering the local fractional integral.

\begin{lemma}
 If the functions \(\mathcal{G}, \mathcal{M} \in C_{\alpha}[\nu, \mu], \eta,\)
	\(\sigma>1\) where \(\frac{1}{\eta}+\frac{1}{\sigma}=1,\) then we get
	\begin{equation*}
	\begin{array}{c}\frac{1}{\Gamma(1+\alpha)} \int_{\nu}^{\mu}|\mathcal{G}(\rho) \mathcal{M}(\rho)|(\mathrm{d} \rho)^{\alpha} \leq\left(\frac{1}{\Gamma(\alpha+1)} \int_{\nu}^{\mu}|\mathcal{G}(\rho)|^{\eta}(\mathrm{d} \rho)^{\alpha}\right)^{\frac{1}{\eta}} \\ \left(\frac{1}{\Gamma(\alpha+1)} \int_{\nu}^{\mu}|\mathcal{M}(\rho)|^{\sigma}(\mathrm{d} \rho)^{\alpha}\right)^{\frac{1}{\sigma}}.\end{array}
	\end{equation*}
\end{lemma}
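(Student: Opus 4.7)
The plan is to mirror the classical derivation of H\"older's inequality, adapted to the local fractional setting, with Young's inequality supplying the pointwise bound that one then integrates.

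First, I would dispose of the degenerate cases: if either $\frac{1}{\Gamma(\alpha+1)}\int_\nu^\mu |\mathcal{G}(\rho)|^\eta (d\rho)^\alpha$ or $\frac{1}{\Gamma(\alpha+1)}\int_\nu^\mu |\mathcal{M}(\rho)|^\sigma (d\rho)^\alpha$ is zero, then the corresponding factor forces the left-hand side to vanish as well and the inequality holds trivially. Otherwise, set
\begin{equation*}
A = \left(\frac{1}{\Gamma(\alpha+1)}\int_\nu^\mu |\mathcal{G}(\rho)|^\eta (d\rho)^\alpha\right)^{1/\eta}, \quad B = \left(\frac{1}{\Gamma(\alpha+1)}\int_\nu^\mu |\mathcal{M}(\rho)|^\sigma (d\rho)^\alpha\right)^{1/\sigma},
\end{equation*}
both strictly positive, and normalize $\mathcal{G}$ and $\mathcal{M}$ by dividing by $A$ and $B$.

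Next, I would invoke the classical Young inequality, which states that for any $a,b\ge 0$ with $\frac{1}{\eta}+\frac{1}{\sigma}=1$ one has $ab\le \frac{a^\eta}{\eta}+\frac{b^\sigma}{\sigma}$. Applying this pointwise with $a=|\mathcal{G}(\rho)|/A$ and $b=|\mathcal{M}(\rho)|/B$ gives
\begin{equation*}
\frac{|\mathcal{G}(\rho)\mathcal{M}(\rho)|}{AB} \;\le\; \frac{|\mathcal{G}(\rho)|^\eta}{\eta A^\eta} \,+\, \frac{|\mathcal{M}(\rho)|^\sigma}{\sigma B^\sigma}.
\end{equation*}
I would then apply the operator $\frac{1}{\Gamma(\alpha+1)}\int_\nu^\mu \cdot\,(d\rho)^\alpha$ to both sides, using linearity and monotonicity of the local fractional integral on $C_\alpha[\nu,\mu]$. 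By the choice of $A$ and $B$, the right-hand side collapses to $\frac{1}{\eta}+\frac{1}{\sigma}=1$, while the left-hand side equals $\frac{1}{AB}\cdot \frac{1}{\Gamma(\alpha+1)}\int_\nu^\mu |\mathcal{G}(\rho)\mathcal{M}(\rho)|(d\rho)^\alpha$. Multiplying through by $AB$ yields the claimed bound.

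The main obstacle is formal rather than conceptual: one must verify that Young's inequality, a purely real-variable pointwise statement, transfers cleanly to the fractal-set valued setting, and that the local fractional integral really does act linearly and preserve the inequality sign on expressions involving $|\mathcal{G}|^\eta$ and $|\mathcal{M}|^\sigma$. Since both functions lie in $C_\alpha[\nu,\mu]$, the standard monotonicity and linearity properties from Yang's fractal calculus framework apply, so the argument reduces to invoking these foundational properties and justifying the normalization step.
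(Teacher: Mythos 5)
Your argument is correct, but there is nothing in the paper to compare it against: the paper states this lemma without proof, importing it verbatim from Yang's monograph \cite{Yang(2012)}, so the normalization-plus-Young derivation you give is supplying the standard proof rather than deviating from an internal one. Your route is sound, and the one point you flag as a formal obstacle is indeed the only place requiring care: since $\mathcal{G},\mathcal{M}$ take values in $\mathbb{R}^{\alpha}$, Young's inequality must be transferred through the map $x \mapsto x^{\alpha}$, which for $0<\alpha\leq 1$ is order-preserving on $[0,\infty)$ and, by the listed fractal-algebra rules (items 2 and 4 of the preliminaries), carries sums to sums and products to products; the classical bound $ab \leq \frac{a^{\eta}}{\eta}+\frac{b^{\sigma}}{\sigma}$ thus becomes $a^{\alpha}b^{\alpha} \leq \frac{a^{\eta\alpha}}{\eta^{\alpha}}+\frac{b^{\sigma\alpha}}{\sigma^{\alpha}}$ in $\mathbb{R}^{\alpha}$. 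Note one cosmetic slip in your write-up: after integrating the normalized inequality, the right-hand side is $\frac{1^{\alpha}}{\eta^{\alpha}}+\frac{1^{\alpha}}{\sigma^{\alpha}}$, which equals $\left(\frac{1}{\eta}+\frac{1}{\sigma}\right)^{\alpha}=1^{\alpha}$ by the same sum rule, not the real number $1$; the conclusion is unaffected since $1^{\alpha}$ is the multiplicative identity of $\mathbb{R}^{\alpha}$. With that adjustment, and the linearity and monotonicity of the local fractional integral on $C_{\alpha}[\nu,\mu]$ (both available in Yang's framework), your proof is complete, including the correct dispatch of the degenerate cases $A=0^{\alpha}$ or $B=0^{\alpha}$.
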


In \cite{Du(2019)}, Du et al. established the class of generalized $m$-convex functions on fractal set \(\mathbb{R}^{\alpha}\) together with integral inequalities of H-H type.

\begin{definition}
Let \(\mathcal{G}:\left[0, b\right] \rightarrow \mathbb{R}^{\alpha}\), with \(b>0\). For any \(\nu, \mu \in\left[0, b\right]\), \(\gamma \in[0,1]\) and \(m \in(0,1]\), if the following inequality
	\begin{equation*}
	\mathcal{G}(\gamma \nu+m(1-\gamma) \mu) \leq \gamma^{\alpha} \mathcal{G}(\nu)+m^{\alpha}(1-\gamma)^{\alpha} \mathcal{G}(\nu)
	\end{equation*}
	holds, then $\mathcal{G}$ is said to be generalized $m$-convex.
\end{definition}

\begin{theorem}
Suppose that \(\mathcal{G}:[0, \infty) \rightarrow \mathbb{R}^{\alpha}\) is a generalized
	\(m\)-convex mapping, where \(m \in(0,1]\) and \(0 \leq \nu<\mu.\) If \(\mathcal{G}(x) \in\)
	${ }_{\nu} I_{\mu}^{\alpha}[\nu, \mu]$, then the following
	\begin{equation*}
	\begin{aligned} \frac{\mathcal{G}\left(\frac{\nu+\mu}{2}\right)}{\Gamma(1+\alpha)} \leq & \frac{\nu \mathcal{I}_{\mu}^{(\alpha)}\left[\mathcal{G}(x)+m^{\alpha} \mathcal{G}\left(\frac{x}{m}\right)\right]}{(2(\mu-\nu))^{\alpha}} \\ \leq &\left(\frac{1}{4}\right)^{\alpha} \frac{\Gamma(1+\alpha)}{\Gamma(1+2 \alpha)}[\mathcal{G}(\nu)+\mathcal{G}(\mu)\\ &+2^{\alpha} m^{\alpha}\left(\mathcal{G}\left(\frac{\nu}{m}\right)+\mathcal{G}\left(\frac{\mu}{m}\right)\right) \\ &\left.+m^{2 \alpha}\left(\mathcal{G}\left(\frac{\nu}{m^{2}}\right)+\left(\frac{\mu}{m^{2}}\right)\right)\right] \end{aligned}
	\end{equation*}
	holds, for all \(x \in[\nu, \mu]\).
\end{theorem}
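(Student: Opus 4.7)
The plan is to establish the two inequalities separately. \textbf{For the left inequality}, I will use the midpoint trick adapted to generalized $m$-convexity. For any $\gamma\in[0,1]$, the symmetric decomposition
\[
\tfrac{\nu+\mu}{2}=\tfrac{1}{2}\bigl(\gamma\nu+(1-\gamma)\mu\bigr)+m\cdot\tfrac{1}{2}\cdot\tfrac{(1-\gamma)\nu+\gamma\mu}{m}
\]
combined with generalized $m$-convexity applied at parameter $1/2$ yields
\[
\mathcal{G}\!\left(\tfrac{\nu+\mu}{2}\right)\leq \left(\tfrac{1}{2}\right)^{\alpha}\!\left[\mathcal{G}(\gamma\nu+(1-\gamma)\mu)+m^{\alpha}\mathcal{G}\!\left(\tfrac{(1-\gamma)\nu+\gamma\mu}{m}\right)\right].
\]
Applying the local fractional integral in $\gamma$ over $[0,1]$ and performing the linear change of variables $x=\gamma\nu+(1-\gamma)\mu$ in the first summand and $x=(1-\gamma)\nu+\gamma\mu$ in the second (each producing a Jacobian factor $(\mu-\nu)^{\alpha}$) rewrites the right-hand side as $\frac{1}{(2(\mu-\nu))^{\alpha}}\,{}_{\nu}\mathcal{I}_{\mu}^{(\alpha)}[\mathcal{G}(x)+m^{\alpha}\mathcal{G}(x/m)]$, which is precisely the left inequality.

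\textbf{For the right inequality}, I will bound ${}_{\nu}\mathcal{I}_{\mu}^{(\alpha)}\mathcal{G}(x)$ and ${}_{\nu}\mathcal{I}_{\mu}^{(\alpha)}\mathcal{G}(x/m)$ separately and then combine. The key observation is that the point $(1-\gamma)\nu+\gamma\mu$ admits two dual $m$-convex representations, namely $(1-\gamma)\nu+m\gamma(\mu/m)$ and $\gamma\mu+m(1-\gamma)(\nu/m)$, producing the two estimates
\[
\mathcal{G}((1-\gamma)\nu+\gamma\mu)\leq (1-\gamma)^{\alpha}\mathcal{G}(\nu)+m^{\alpha}\gamma^{\alpha}\mathcal{G}(\mu/m)\ \text{and}\ \mathcal{G}((1-\gamma)\nu+\gamma\mu)\leq \gamma^{\alpha}\mathcal{G}(\mu)+m^{\alpha}(1-\gamma)^{\alpha}\mathcal{G}(\nu/m).
\]
Integrating each against $(d\gamma)^{\alpha}/\Gamma(1+\alpha)$ using the beta-type identity $\tfrac{1}{\Gamma(1+\alpha)}\int_{0}^{1}\gamma^{\alpha}(d\gamma)^{\alpha}=\tfrac{1}{\Gamma(1+\alpha)}\int_{0}^{1}(1-\gamma)^{\alpha}(d\gamma)^{\alpha}=\tfrac{\Gamma(1+\alpha)}{\Gamma(1+2\alpha)}$ and adding (with $X+X=2^{\alpha}X$ in $\mathbb{R}^{\alpha}$) gives ${}_{\nu}\mathcal{I}_{\mu}^{(\alpha)}\mathcal{G}(x)\leq\bigl(\tfrac{\mu-\nu}{2}\bigr)^{\alpha}\tfrac{\Gamma(1+\alpha)}{\Gamma(1+2\alpha)}[\mathcal{G}(\nu)+\mathcal{G}(\mu)+m^{\alpha}(\mathcal{G}(\nu/m)+\mathcal{G}(\mu/m))]$. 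An entirely analogous argument applied to the rescaled function $\mathcal{G}(\cdot/m)$ (viewed on $[\nu/m,\mu/m]$) bounds ${}_{\nu}\mathcal{I}_{\mu}^{(\alpha)}\mathcal{G}(x/m)$ by the same expression with $\nu\mapsto\nu/m$ and $\mu\mapsto\mu/m$.

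Weighting the second bound by $m^{\alpha}$ and adding it to the first, the cross terms $m^{\alpha}(\mathcal{G}(\nu/m)+\mathcal{G}(\mu/m))$ appear twice and collapse to $2^{\alpha}m^{\alpha}(\mathcal{G}(\nu/m)+\mathcal{G}(\mu/m))$, producing exactly the symmetric bracket in the theorem. A concluding division by $(2(\mu-\nu))^{\alpha}$, together with the simplification $\bigl((\mu-\nu)/2\bigr)^{\alpha}/(2(\mu-\nu))^{\alpha}=(1/4)^{\alpha}$, yields the stated coefficient $(1/4)^{\alpha}\Gamma(1+\alpha)/\Gamma(1+2\alpha)$. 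The principal obstacle throughout is the careful bookkeeping of fractal powers: since $X+X=2^{\alpha}X$ in $\mathbb{R}^{\alpha}$, each averaging step introduces a $(1/2)^{\alpha}$ rather than $1/2$, and the symmetric factors $2^{\alpha}m^{\alpha}$ and $m^{2\alpha}$ emerge only after organizing the two dual applications of $m$-convexity so that the $\nu\leftrightarrow\mu$ and $\nu/m\leftrightarrow\mu/m$ symmetries are restored upon integration.
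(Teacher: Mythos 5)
Your argument is correct, but it is worth noting how it relates to the paper: the paper never proves this statement itself — it is quoted in the Preliminaries from Du et al.\ \cite{Du(2019)} — and the paper's own machinery recovers it only as the specialization $h(\gamma)=\gamma^{\alpha}$ of its Theorem \ref{theoremhh}. Your left inequality is essentially the same midpoint device the paper uses there (set $y=\gamma\nu+(1-\gamma)\mu$, $x=(1-\gamma)\nu+\gamma\mu$, apply convexity at $\gamma=\tfrac12$, integrate), though you orient the changes of variables directly rather than carrying the paper's awkward $-(-1)^{\alpha}$ factors. For the right inequality the routes genuinely diverge: the paper estimates each integrand \emph{once}, via $\mathcal{G}(\gamma\nu+(1-\gamma)\mu)\leq h(\gamma)\mathcal{G}(\nu)+m^{\alpha}h(1-\gamma)\mathcal{G}(\mu/m)$ and its rescaled analogue, which after integration yields the asymmetric bracket $\mathcal{G}(\nu)+m^{2\alpha}\mathcal{G}(\mu/m^{2})-m^{\alpha}(-1)^{\alpha}\bigl(\mathcal{G}(\nu/m)+\mathcal{G}(\mu/m)\bigr)$ times ${}_{0}\mathcal{I}_{1}^{(\alpha)}h$ — not literally the symmetric bracket of the stated theorem. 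You instead use the two dual $m$-convex representations of each point, $(1-\gamma)\nu+m\gamma(\mu/m)$ and $\gamma\mu+m(1-\gamma)(\nu/m)$ (and likewise at the $1/m$-scale), integrate with $\tfrac{1}{\Gamma(1+\alpha)}\int_{0}^{1}\gamma^{\alpha}(d\gamma)^{\alpha}=\tfrac{\Gamma(1+\alpha)}{\Gamma(1+2\alpha)}$, and average using $X+X=2^{\alpha}X$; that symmetrization is precisely the extra step needed to produce the $\nu\leftrightarrow\mu$-symmetric bracket and the coefficient $(1/4)^{\alpha}\tfrac{\Gamma(1+\alpha)}{\Gamma(1+2\alpha)}$ exactly as quoted. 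What each approach buys: the paper's one-sided estimate is shorter and works for arbitrary $h$, but delivers the symmetric form of Du's theorem only after an additional averaging over the swap $\nu\leftrightarrow\mu$, which the paper elides when it claims the specialization; your argument is tied to $h(\gamma)=\gamma^{\alpha}$ but lands on the stated inequality directly, with the $2^{\alpha}m^{\alpha}$ and $m^{2\alpha}$ bookkeeping fully justified. (Your reading of the statement is also the intended one: the final $\left(\frac{\mu}{m^{2}}\right)$ in the display is missing a $\mathcal{G}$, a typo in the paper.)
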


The other class of generalized convex functions include $h$-convex mapping on fractal set \(\mathbb{R}^{\alpha}\). In \cite{Vivas(2016)}, the generalized $h$-convex mapping on fractal set was introduced, through which new inequalities of H-H type were studied. 

\begin{definition}
	 Let \(h: I \subseteq \mathbb{R} \rightarrow \mathbb{R}^{\alpha}\) be a non-negative mapping
	and \(h \not\equiv 0^{\alpha}.\) The mapping \(\mathcal{G}: J\subseteq \mathbb{R} \rightarrow \mathbb{R}^{\alpha}\) is said to be generalized
	\(h\)-convex if \(\mathcal{G}\) is non-negative and the inequality
	\begin{equation*}
	\mathcal{G}(\gamma \nu+(1-\gamma) \mu) \leq h(\gamma) \mathcal{G}(\nu)+h(1-\gamma) \mathcal{G}(\mu)
	\end{equation*}
	holds, for all \(\nu, \mu \in J\) and \(\gamma \in(0,1)\).
\end{definition}

\begin{theorem}\label{vivastheorem}
	Suppose that \(h: I \subseteq \mathbb{R} \rightarrow \mathbb{R}^{\alpha}\) is a positive integral function with \(h \not\equiv 0^{\alpha}.\) Let \(\mathcal{G}: J \subseteq \mathbb{R} \rightarrow \mathbb{R}^{\alpha}\) be an \(h\)-convex, positive and integrable function, \(\mu, \nu \in J\) with \(\nu<\mu\), then the following inequality
\begin{equation*}
\begin{array}{l}\frac{1^{\alpha}}{\left(1^{\alpha}-(-1)^{\alpha}\right) h(1 / 2) \Gamma(\alpha+1)} \mathcal{G}\left(\frac{\nu+\mu}{2}\right) \\ \quad \leq \frac{1^{\alpha}}{(\mu-\nu)^{\alpha}} { }_{\nu} \mathcal{I}_{\mu}^{(\alpha)} \mathcal{G} \leq\left(\mathcal{G}(\mu)-(-1)^{\alpha} \mathcal{G}(\nu)\right){ }_{0} \mathcal{I}_{1}^{(\alpha)} h\end{array}
\end{equation*}
holds.
\end{theorem}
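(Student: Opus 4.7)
The plan is to establish the two halves of the inequality separately, each time applying the definition of generalized $h$-convexity and then integrating the resulting pointwise inequality against the local fractional integral ${}_{0}\mathcal{I}_{1}^{(\alpha)}$ in the parameter $\gamma$.

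For the left-hand inequality, I would first rewrite the midpoint as a symmetric convex combination
$$\frac{\nu+\mu}{2} = \frac{1}{2}\bigl[\gamma\nu + (1-\gamma)\mu\bigr] + \frac{1}{2}\bigl[(1-\gamma)\nu + \gamma\mu\bigr],$$
and apply the $h$-convexity hypothesis with parameter $1/2$ to obtain
$$\mathcal{G}\!\left(\tfrac{\nu+\mu}{2}\right) \leq h(1/2)\bigl[\mathcal{G}(\gamma\nu + (1-\gamma)\mu) + \mathcal{G}((1-\gamma)\nu + \gamma\mu)\bigr].$$
Then I would integrate both sides over $\gamma\in[0,1]$ with the local fractional integral. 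On the right, the substitution $x=(1-\gamma)\nu+\gamma\mu$ preserves orientation and turns the second summand into $(\mu-\nu)^{-\alpha}\,{}_{\nu}\mathcal{I}_{\mu}^{(\alpha)}\mathcal{G}$, whereas the substitution $x=\gamma\nu+(1-\gamma)\mu$ reverses orientation, contributing a $(-1)^{\alpha}$ factor together with the sign flip from ${}_{\nu}\mathcal{I}_{\mu}^{(\alpha)}=-{}_{\mu}\mathcal{I}_{\nu}^{(\alpha)}$. Combining the two summands then produces the coefficient $\bigl(1^{\alpha}-(-1)^{\alpha}\bigr)$ seen in the statement.

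For the right-hand inequality I would apply $h$-convexity directly with parameter $\gamma$ to get
$$\mathcal{G}(\gamma\nu+(1-\gamma)\mu)\leq h(\gamma)\mathcal{G}(\nu)+h(1-\gamma)\mathcal{G}(\mu),$$
and then apply ${}_{0}\mathcal{I}_{1}^{(\alpha)}$ to both sides. The left-hand side turns into a scalar multiple of ${}_{\nu}\mathcal{I}_{\mu}^{(\alpha)}\mathcal{G}$ via the same change of variable used above, and on the right one obtains $\mathcal{G}(\nu)\,{}_{0}\mathcal{I}_{1}^{(\alpha)}h(\gamma)+\mathcal{G}(\mu)\,{}_{0}\mathcal{I}_{1}^{(\alpha)}h(1-\gamma)$. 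The reflection $\gamma\mapsto 1-\gamma$ in the local fractional sense identifies these two integrals up to a $(-1)^{\alpha}$ factor, letting me collect the combination $\bigl(\mathcal{G}(\mu)-(-1)^{\alpha}\mathcal{G}(\nu)\bigr)\,{}_{0}\mathcal{I}_{1}^{(\alpha)}h$ on the right-hand side.

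The main obstacle I anticipate is the careful bookkeeping of the $(-1)^{\alpha}$ factors arising whenever the direction of local fractional integration is reversed, in particular keeping straight the interaction between the substitution $x=\gamma\nu+(1-\gamma)\mu$, the reflection $\gamma\mapsto1-\gamma$, and the orientation rule ${}_{\nu}\mathcal{I}_{\mu}^{(\alpha)}=-{}_{\mu}\mathcal{I}_{\nu}^{(\alpha)}$. Once these sign conventions are reconciled so that the asymmetric coefficients $1^{\alpha}-(-1)^{\alpha}$ on the left and $\mathcal{G}(\mu)-(-1)^{\alpha}\mathcal{G}(\nu)$ on the right are reproduced exactly, the rest of the argument is a routine assembly of the integrated pointwise inequalities.
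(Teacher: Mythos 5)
Your proposal is correct and takes essentially the same route as the paper: Theorem \ref{vivastheorem} is quoted there from Vivas et al.\ and is proved, in effect, as the $m=1$ case of Theorem \ref{theoremhh}, whose proof consists of exactly your two steps --- the symmetric midpoint decomposition with $h\left(\frac{1}{2}\right)$ integrated in $\gamma$ (with the $-(-1)^{\alpha}$ orientation factors of identities (\ref{profs})--(\ref{proft})) for the left half, and pointwise $h$-convexity integrated over $[0,1]$ with the reflection $\gamma \mapsto 1-\gamma$ for the right half. Your $(-1)^{\alpha}$ bookkeeping reproduces the coefficients $1^{\alpha}-(-1)^{\alpha}$ and $\mathcal{G}(\mu)-(-1)^{\alpha}\mathcal{G}(\nu)$ just as the paper's computation does.
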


Luo et. al \cite{Luo(2020)} studied new inequalities of Fej\'er-H-H type via generalized
$h$-convexity on fractal sets. This was achieved using the following lemma.

\begin{lemma}\label{luolemma}
Suppose that \(\mathcal{G}: J \subseteq \mathbb{R} \rightarrow \mathbb{R}^{\alpha}\) is local continuous on the interior of J, \(J^{\circ}\). Let \(\mathcal{W}:[\nu, \mu] \rightarrow \mathbb{R}^{\alpha}, \mathcal{W} \geq 0^{\alpha}\) be symmetric to \(\frac{\nu+\mu}{2}\) and local continuous. If \(\mathcal{G}^{(\alpha)} \in C_{\alpha}[\nu, \mu]\) for \(\nu, \mu \in J\) with \(\nu<\mu,\) then the
following identity	
	\begin{equation*}
	\begin{array}{l}\frac{\mathcal{G}(\nu)+\mathcal{G}(\mu)}{2^{\alpha}} {}_{\nu} \mathcal{I}_{\mu}^{(\alpha)} \mathcal{W}(x)-{ }_{\nu} \mathcal{I}_{\mu}^{(\alpha)} \mathcal{W}(x) \mathcal{G}(x) \\ =\left(\frac{\mu-\nu}{4}\right)^{\alpha} \frac{1}{\Gamma(1+\alpha)} \int_{0}^{1}\left[\frac{1}{\Gamma(1+\alpha)} \int_{m(\gamma)}^{n(\gamma)} \mathcal{W}(x)(dx)^{\alpha}\right] \\ \quad\left(\mathcal{G}^{(\alpha)}(n(\gamma))-\mathcal{G}^{(\alpha)}(m(\gamma))\right)(d \gamma)^{\alpha}\end{array}
	\end{equation*}
	holds, where \(m(\gamma)=\gamma \nu+(1-\gamma) \frac{\nu+\mu}{2}, n(\gamma)=\gamma \mu+(1-\gamma) \frac{\nu+\mu}{2}\) and \(\gamma \in[0,1]\).
\end{lemma}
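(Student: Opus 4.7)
The plan is to prove the identity by expanding the right-hand side and reducing it to the left-hand side through local fractional integration by parts in the variable $\gamma$. First I would split the outer integrand into the two pieces corresponding to $\mathcal{G}^{(\alpha)}(n(\gamma))$ and $\mathcal{G}^{(\alpha)}(m(\gamma))$, and denote the inner integral by
\[
A(\gamma) := \frac{1}{\Gamma(1+\alpha)}\int_{m(\gamma)}^{n(\gamma)} \mathcal{W}(x)\,(dx)^{\alpha}.
\]
Since $n(\gamma)=\frac{\nu+\mu}{2}+\gamma\frac{\mu-\nu}{2}$ and $m(\gamma)=\frac{\nu+\mu}{2}-\gamma\frac{\mu-\nu}{2}$ are affine in $\gamma$, the local fractional chain rule yields antiderivatives $\bigl(\tfrac{2}{\mu-\nu}\bigr)^{\alpha}\mathcal{G}(n(\gamma))$ and $-(-1)^{\alpha}\bigl(\tfrac{2}{\mu-\nu}\bigr)^{\alpha}\mathcal{G}(m(\gamma))$, respectively, which will serve as the antiderivative factors in the integration by parts.

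Next, I would apply local fractional integration by parts to each of the two pieces. At $\gamma=0$ the inner integral $A(0)$ vanishes, so the lower boundary contribution disappears, while at $\gamma=1$ we have $A(1)={}_{\nu}\mathcal{I}^{(\alpha)}_{\mu}\mathcal{W}$, $n(1)=\mu$, and $m(1)=\nu$; after multiplying by the overall prefactor $\bigl(\frac{\mu-\nu}{4}\bigr)^{\alpha}$, the upper boundary produces exactly the term $\frac{\mathcal{G}(\nu)+\mathcal{G}(\mu)}{2^{\alpha}}\,{}_{\nu}\mathcal{I}^{(\alpha)}_{\mu}\mathcal{W}$ of the left-hand side. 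The remaining integrals involve the local fractional $\alpha$-derivative of $A(\gamma)$; by a Leibniz-type rule for the local fractional integral with variable endpoints, $A^{(\alpha)}(\gamma)$ is a linear combination of $\mathcal{W}(n(\gamma))$ and $\mathcal{W}(m(\gamma))$ with slope factors $\bigl(\tfrac{\mu-\nu}{2}\bigr)^{\alpha}$ and $\bigl(\tfrac{\nu-\mu}{2}\bigr)^{\alpha}$.

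Finally, I would perform the linear substitutions $x=n(\gamma)$ and $x=m(\gamma)$ to recast the remaining integrals as local fractional integrals over $[(\nu+\mu)/2,\mu]$ and $[\nu,(\nu+\mu)/2]$, respectively. Invoking the assumed symmetry $\mathcal{W}(x)=\mathcal{W}(\nu+\mu-x)$, the two half-interval contributions merge into the single integral $-{}_{\nu}\mathcal{I}^{(\alpha)}_{\mu}\mathcal{W}(x)\mathcal{G}(x)$ appearing on the left. The main obstacle will be the careful bookkeeping of the $(-1)^{\alpha}$ factors arising from differentiating along the decreasing affine map $m(\gamma)$ and from the Leibniz rule applied to $A(\gamma)$; once those signs and the $\Gamma(1+\alpha)$ normalisations are reconciled with the conventions used in the preceding theorems of this section, everything else reduces to a routine change of variables.
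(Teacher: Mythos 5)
The paper states Lemma \ref{luolemma} without proof, importing it verbatim from Luo et al.\ \cite{Luo(2020)}, and your integration-by-parts plan is essentially that original proof: the antiderivatives \((2/(\mu-\nu))^{\alpha}\mathcal{G}(n(\gamma))\) and \((2/(\mu-\nu))^{\alpha}\mathcal{G}(m(\gamma))\), the vanishing lower boundary term from \(A(0)=0\) (since \(m(0)=n(0)=\frac{\nu+\mu}{2}\)), the upper boundary term producing \(\frac{\mathcal{G}(\nu)+\mathcal{G}(\mu)}{2^{\alpha}}\,{}_{\nu}\mathcal{I}^{(\alpha)}_{\mu}\mathcal{W}\), and the substitutions \(x=n(\gamma)\), \(x=m(\gamma)\) combined with \(\mathcal{W}(x)=\mathcal{W}(\nu+\mu-x)\) to recover \(-{}_{\nu}\mathcal{I}^{(\alpha)}_{\mu}\mathcal{W}(x)\mathcal{G}(x)\) are exactly the steps there. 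The one obstacle you flag is settled by the convention of \cite{Luo(2020)}: the chain rule along the decreasing affine map \(m(\gamma)\) is taken with \(\bigl(-\frac{\mu-\nu}{2}\bigr)^{\alpha}=-\bigl(\frac{\mu-\nu}{2}\bigr)^{\alpha}\), so the boundary term yields \(\mathcal{G}(\nu)+\mathcal{G}(\mu)\) outright rather than \(\mathcal{G}(\mu)-(-1)^{\alpha}\mathcal{G}(\nu)\), and with that sign convention your sketch closes correctly.
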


\begin{theorem}\label{luo}
 Suppose that \(h:I \subseteq \mathbb{R} \rightarrow \mathbb{R}^{\alpha}\) is a positive mapping, and
 \(\mathcal{W}:[\nu, \mu] \rightarrow \mathbb{R}^{\alpha}, \mathcal{W} \geq 0^{\alpha}\) is symmetric to \(\frac{\nu+\mu}{2}.\) If \(\mathcal{G}:\)\([\nu, \mu] \rightarrow \mathbb{R}^{\alpha}\) is generalized $h$-convex and \(\mathcal{G}(x), \mathcal{W}(x) \in \mathcal{I}_{x}^{\alpha}[\nu, \mu],\) with \(h\left(\frac{1}{2}\right) \neq 0^{\alpha}\), then we have
	\begin{equation}
	\begin{array}{l}\frac{\mathcal{G}\left(\frac{\nu+\mu}{2}\right){ }_{\nu} \mathcal{I}_{\mu}^{(\alpha)} \mathcal{W}(x)}{2^{\alpha} h\left(\frac{1}{2}\right)} \leq{ }_{\nu} \mathcal{I}_{\mu}^{(\alpha)} \mathcal{W}(x) \mathcal{G}(x) \\ \quad \leq \frac{\mathcal{G}(\nu)+\mathcal{G}(\mu)}{2^{\alpha}}{ }_{\nu} \mathcal{I}_{\mu}^{\alpha}\left[h\left(\frac{\mu-x}{\mu-\nu}\right)+h\left(\frac{x-\nu}{\mu-\nu}\right)\right] \mathcal{W}(x).\end{array}
	\end{equation}
\end{theorem}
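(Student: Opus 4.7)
The plan is to prove the two inequalities separately, mirroring the structure of the classical Fejér-H-H inequality but working in the fractal setting. For the left inequality, I would start from the identity $\frac{\nu+\mu}{2}=\frac{1}{2}\bigl(\gamma\nu+(1-\gamma)\mu\bigr)+\frac{1}{2}\bigl((1-\gamma)\nu+\gamma\mu\bigr)$ for $\gamma\in[0,1]$, apply the definition of generalized $h$-convexity with weight $1/2$ to obtain
\[
\mathcal{G}\!\left(\frac{\nu+\mu}{2}\right)\le h(1/2)\bigl[\mathcal{G}(\gamma\nu+(1-\gamma)\mu)+\mathcal{G}((1-\gamma)\nu+\gamma\mu)\bigr],
\]
and then multiply both sides by $\mathcal{W}(\gamma\nu+(1-\gamma)\mu)$ before applying the local fractional integral $\frac{1}{\Gamma(1+\alpha)}\int_0^1(\cdot)(d\gamma)^{\alpha}$. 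The symmetry of $\mathcal{W}$ about $\frac{\nu+\mu}{2}$ lets me identify the two integrals appearing on the right side after the substitution $x=\gamma\nu+(1-\gamma)\mu$ (and its companion $x=(1-\gamma)\nu+\gamma\mu$); each yields $\frac{1}{(\mu-\nu)^{\alpha}}\,{}_\nu\mathcal{I}_\mu^{(\alpha)}\mathcal{G}(x)\mathcal{W}(x)$, while the left side gives $\frac{\mathcal{G}((\nu+\mu)/2)}{(\mu-\nu)^{\alpha}}\,{}_\nu\mathcal{I}_\mu^{(\alpha)}\mathcal{W}(x)$. Dividing through by $2^{\alpha}h(1/2)$ produces the claimed left bound.

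For the right inequality, I would apply generalized $h$-convexity pointwise: writing $x=\tfrac{\mu-x}{\mu-\nu}\nu+\tfrac{x-\nu}{\mu-\nu}\mu$ for $x\in[\nu,\mu]$ gives
\[
\mathcal{G}(x)\le h\!\left(\frac{\mu-x}{\mu-\nu}\right)\mathcal{G}(\nu)+h\!\left(\frac{x-\nu}{\mu-\nu}\right)\mathcal{G}(\mu).
\]
Multiplying by $\mathcal{W}(x)\ge0^{\alpha}$ and integrating against ${}_\nu\mathcal{I}_\mu^{(\alpha)}$ yields
\[
{}_\nu\mathcal{I}_\mu^{(\alpha)}\mathcal{G}(x)\mathcal{W}(x)\le \mathcal{G}(\nu)\,{}_\nu\mathcal{I}_\mu^{(\alpha)} h\!\left(\tfrac{\mu-x}{\mu-\nu}\right)\mathcal{W}(x)+\mathcal{G}(\mu)\,{}_\nu\mathcal{I}_\mu^{(\alpha)} h\!\left(\tfrac{x-\nu}{\mu-\nu}\right)\mathcal{W}(x).
\]
The key step is to use the symmetry of $\mathcal{W}$: under the change of variable $x\mapsto\nu+\mu-x$, the arguments $\tfrac{\mu-x}{\mu-\nu}$ and $\tfrac{x-\nu}{\mu-\nu}$ swap while $\mathcal{W}(x)$ is preserved, so the two weighted integrals are equal (call their common value $A$). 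Then $\mathcal{G}(\nu)A+\mathcal{G}(\mu)A=(\mathcal{G}(\nu)+\mathcal{G}(\mu))A$, which in view of the fractal arithmetic rule $A+A=2^{\alpha}A$ is exactly $\tfrac{\mathcal{G}(\nu)+\mathcal{G}(\mu)}{2^{\alpha}}(A+A)$, matching the right-hand side of the theorem.

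The main obstacle I anticipate is bookkeeping within $\mathbb{R}^{\alpha}$: in particular, treating the scalar $2^{\alpha}$ consistently (so that $A+B$ with $A=B$ becomes $2^{\alpha}A$ rather than $2A$), tracking the $(\mu-\nu)^{\alpha}$ Jacobian factors that appear when passing between $(d\gamma)^{\alpha}$ and $(dx)^{\alpha}$ under affine substitutions, and justifying that the linear change of variable and the symmetry relation $\mathcal{W}(\nu+\mu-x)=\mathcal{W}(x)$ interact correctly with the local fractional integral. Once these manipulations are rigorously handled (they are standard for local fractional calculus as developed in Yang's framework and used in Lemma \ref{luolemma} and Theorem \ref{vivastheorem}), the two bounds fall out cleanly from the single defining inequality of generalized $h$-convexity.
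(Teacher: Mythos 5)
Your proposal is correct and takes essentially the same route as the paper: the paper itself quotes Theorem \ref{luo} from Luo et al.\ without proof, but its own proof of the generalization (Theorem \ref{fejeresult}, which reduces to this statement at \(m=1\)) rests on exactly your two ingredients --- the defining inequality applied at \(\gamma=\tfrac{1}{2}\) to the reflected pair \(\bigl(x,\,\nu+\mu-x\bigr)\) combined with the symmetry \(\mathcal{W}(\nu+\mu-x)=\mathcal{W}(x)\) for the left bound, and the pointwise decomposition \(x=\tfrac{\mu-x}{\mu-\nu}\nu+\tfrac{x-\nu}{\mu-\nu}\mu\) followed by weighted integration for the right bound. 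Your parametrization by \(\gamma\in[0,1]\) with an affine substitution, versus the paper's direct integration in \(x\) using the reflection \(x\mapsto\nu+\mu-x\), is only a cosmetic difference, and you correctly handle the fractal-arithmetic point \(A+A=2^{\alpha}A\) on which the constants depend.
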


\section{New definition and properties}
The concept of generalized $(h-m)$-convex mappings can be introduced as
follows.

\begin{definition}\label{defmh}
	Suppose that \((0,1)\subseteq I \subseteq \mathbb{R}\) and \(h: I \rightarrow \mathbb{R}^{\alpha}\) is a non-negative function. We say that \(\mathcal{G}:[0, b] \rightarrow \mathbb{R}^{\alpha}\) is generalized $(h-m)$-convex function, if \(\mathcal{G}\) is positive, then the following inequality
	\begin{equation}
	\mathcal{G}(\gamma \nu+m(1-\gamma) \mu) \leq h(\gamma) \mathcal{G}(x)+m^{\alpha} h(1-\gamma) \mathcal{G}(\mu)\label{mh}
	\end{equation}
holds, for all \(\nu, \mu\in[0, b], m \in[0,1]\) and \(\gamma \in[0,1]\).	
\end{definition}
If inequality (\ref{mh}) is reversed, then we say that \(\mathcal{G}\) is generalized $(h-m)$-concave on fractal set.
\begin{corollary}
Choosing \(\gamma=\frac{1}{2}\) in inequality (\ref{mh}) of Definition \ref{defmh}, we get Jensen-type \((h, m)\)-convex on fractal set as follows:
	\begin{equation*}
	\mathcal{G}\left(\frac{\nu+m \mu}{2}\right) \leq h\bigg(\frac{1}{2}\bigg)\left[\mathcal{G}(\nu)+m^{\alpha} \mathcal{G}(\mu)\right].
	\end{equation*}
\end{corollary}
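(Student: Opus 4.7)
The plan is to obtain the corollary by direct substitution into the defining inequality (\ref{mh}) of Definition \ref{defmh}, since the statement is announced as an immediate specialization at $\gamma = 1/2$ rather than as a new inequality requiring independent argument.

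Concretely, I would start from (\ref{mh}), namely
\begin{equation*}
\mathcal{G}(\gamma \nu + m(1-\gamma)\mu) \leq h(\gamma)\mathcal{G}(\nu) + m^{\alpha} h(1-\gamma)\mathcal{G}(\mu),
\end{equation*}
which holds for all admissible $\nu,\mu,m,\gamma$ (reading the $\mathcal{G}(x)$ on the right side of (\ref{mh}) as $\mathcal{G}(\nu)$, which is clearly a typographical issue in the stated definition). Setting $\gamma = \tfrac{1}{2}$, the argument of $\mathcal{G}$ on the left collapses to $\tfrac{1}{2}\nu + \tfrac{m}{2}\mu = \tfrac{\nu + m\mu}{2}$. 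On the right, both arguments of $h$ become $\tfrac{1}{2}$, so $h(\gamma)$ and $h(1-\gamma)$ coincide and can be factored.

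The only step worth noting is the factoring: after substitution the bound reads $h(\tfrac{1}{2})\mathcal{G}(\nu) + m^{\alpha} h(\tfrac{1}{2})\mathcal{G}(\mu)$, and because multiplication on the fractal set $\mathbb{R}^{\alpha}$ is commutative and distributes over addition (by properties 4 and 6 of the fractal arithmetic recalled in the Preliminaries), one may pull $h(\tfrac{1}{2})$ out to obtain $h(\tfrac{1}{2})\bigl[\mathcal{G}(\nu) + m^{\alpha}\mathcal{G}(\mu)\bigr]$, which is the claimed Jensen-type bound.

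There is essentially no obstacle here, since the corollary is a pure specialization; the only thing I would double-check is that the algebraic manipulations on the right-hand side respect the $\mathbb{R}^{\alpha}$ structure (commutativity and distributivity of products of $\alpha$-powers), which is guaranteed by the axioms listed at the start of the Preliminaries. Consequently the proof amounts to a single line of substitution followed by one line of factoring.
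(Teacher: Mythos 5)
Your proposal is correct and matches the paper exactly: the paper states this corollary without a separate proof, the substitution $\gamma=\tfrac{1}{2}$ into (\ref{mh}) being the entire argument, and your reading of the $\mathcal{G}(x)$ on the right of (\ref{mh}) as a typo for $\mathcal{G}(\nu)$ agrees with the paper's evident intent (compare inequality (\ref{pf}), where the definition is restated correctly). The factoring of $h\bigl(\tfrac{1}{2}\bigr)$ via the distributive law on $\mathbb{R}^{\alpha}$ is the same one-line computation the paper implicitly performs.
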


Some particular cases of Definition \ref{defmh} are presented in the following remark.
\begin{remark}
	Consider  Definition \ref{defmh}.

	\begin{itemize}
		\item[i.] If $h(\gamma)=\gamma^{\alpha}$ and $m=1$, we get generalized convex function \cite{Moetal(2014)}.
			\item [ii.] If $h(\gamma)=\gamma^{s\alpha }$, we get generalized $(s-m)$ convex \cite{Abdeljawad(2020)}.
			
		\item [iii.] If $h(\gamma)=\gamma^{\alpha}$, we have generalized $m$-convexity on fractal sets \cite{Du(2019)}.
			\item [iv.] If $m=1$, we get the class of generalized $h$-convexity on fractal sets \cite{Vivas(2016)}.
			\item [v] If  $\alpha=1$ , we obtain the class of $(h-m)$-convexity \cite{Ozdemir(2016)}. 
			\item[vi.] If $\alpha=m=1$, we get the class of $h$-convexity \cite{Varosanec(2007)}.
\end{itemize}
\end{remark}

\begin{proposition}\label{prop}
	Suppose that $h_{1}$ and $h_{2}$ are positive functions defined on \(I \subseteq \mathbb{R}\), such that
	\begin{equation*}
	h_{2}(\gamma) \leq h_{1}(\gamma)
	\end{equation*}
	for \(\gamma \in(0,1) .\) If \(\mathcal{G}\) is generalized $(h_{2}-m)$-convex on fractal sets, then \(\mathcal{G}\) is generalized $(h_{1}-m)$-convex on fractal sets.
\end{proposition}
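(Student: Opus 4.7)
The plan is to argue directly from Definition \ref{defmh} using the pointwise comparison $h_2(\gamma)\le h_1(\gamma)$ together with positivity of $\mathcal{G}$, so that the upper bound in the $(h_2-m)$-convex inequality can be monotonically enlarged to the upper bound in the $(h_1-m)$-convex inequality.

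First, I would fix arbitrary $\nu,\mu\in[0,b]$, $m\in[0,1]$ and $\gamma\in[0,1]$, and write down the generalized $(h_2-m)$-convexity hypothesis, namely
\begin{equation*}
\mathcal{G}(\gamma\nu+m(1-\gamma)\mu)\le h_2(\gamma)\mathcal{G}(\nu)+m^{\alpha}h_2(1-\gamma)\mathcal{G}(\mu).
\end{equation*}
Next, I would invoke the comparison $h_2(\gamma)\le h_1(\gamma)$ (which applies with $1-\gamma$ in place of $\gamma$ as well, since $1-\gamma\in(0,1)$ when $\gamma\in(0,1)$; the boundary cases $\gamma\in\{0,1\}$ can be handled separately or by a limiting/trivial argument). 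Because $\mathcal{G}$ is positive on the fractal set (i.e. $\mathcal{G}(\nu),\mathcal{G}(\mu)\ge 0^{\alpha}$), multiplication in $\mathbb{R}^{\alpha}$ preserves the order, giving $h_2(\gamma)\mathcal{G}(\nu)\le h_1(\gamma)\mathcal{G}(\nu)$ and $m^{\alpha}h_2(1-\gamma)\mathcal{G}(\mu)\le m^{\alpha}h_1(1-\gamma)\mathcal{G}(\mu)$. Adding these two estimates in $\mathbb{R}^{\alpha}$ yields
\begin{equation*}
h_2(\gamma)\mathcal{G}(\nu)+m^{\alpha}h_2(1-\gamma)\mathcal{G}(\mu)\le h_1(\gamma)\mathcal{G}(\nu)+m^{\alpha}h_1(1-\gamma)\mathcal{G}(\mu).
\end{equation*}

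Finally, chaining this with the first inequality by transitivity of $\le$ on $\mathbb{R}^{\alpha}$ produces exactly inequality (\ref{mh}) with $h_1$ in place of $h_2$, proving that $\mathcal{G}$ is generalized $(h_1-m)$-convex.

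There is essentially no obstacle: the entire argument is a one-line monotonicity chase. The only subtle point worth flagging is the justification that the induced order on $\mathbb{R}^{\alpha}$ is compatible with addition and with multiplication by non-negative fractal scalars; this follows from the algebraic properties of $\mathbb{R}^{\alpha}$ listed in the Preliminaries and from the positivity assumptions on $h_1,h_2$ and $\mathcal{G}$ built into Definition \ref{defmh}.
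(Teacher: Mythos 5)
Your proposal is correct and follows essentially the same route as the paper's own proof: apply the generalized $(h_{2}-m)$-convexity inequality from Definition \ref{defmh} and then enlarge the right-hand side term by term using $h_{2}(\gamma)\leq h_{1}(\gamma)$ together with the positivity of $\mathcal{G}$. Your additional remarks on the order being compatible with addition and multiplication by non-negative elements of $\mathbb{R}^{\alpha}$, and on the boundary values of $\gamma$, are just careful elaborations of steps the paper performs implicitly.
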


\begin{proof}
	Let \(\mathcal{G}\) be a generalized $(h_{2}-m)$-convex on fractal sets, then we obtain the following inequality
	\begin{equation*}
	\begin{aligned} \mathcal{G}(\gamma \nu+m(1-\gamma) \mu) & \leq h_{2}(\gamma) \mathcal{G}(x)+m h_{2}(1-\gamma) \mathcal{G}(\mu) \\ & \leq h_{1}(\gamma) \mathcal{G}(x)+m h_{1}(1-\gamma) \mathcal{G}(\mu), \end{aligned}
	\end{equation*}
for all \(\nu, \mu \in[0, b]\) and \(\gamma \in(0,1)\).
This completes the proof of Proposition \ref{prop}.
\end{proof}

\begin{proposition}
 If \(\mathcal{G}, \mathcal{M}\) are generalized $(h-m)$-convex functions on fractal sets and \(\lambda>0,\) then we have the following properties:
 \begin{itemize}
 	\item [i.] $\mathcal{G}+\mathcal{M}$ is generalized $(h-m)$-convex function on fractal sets.
 	\item[ii.] $\lambda \mathcal{G}$ is generalized $(h-m)$-convex function on fractal sets. 
 \end{itemize}
\end{proposition}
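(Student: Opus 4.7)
The plan is to verify Definition \ref{defmh} for each of $\mathcal{G}+\mathcal{M}$ and $\lambda\mathcal{G}$ directly, using the ring-like axioms (properties 1--7) of $\mathbb{R}^{\alpha}$ stated in the preliminaries. Both statements should follow almost mechanically once the defining inequality for each summand (respectively for $\mathcal{G}$) is written down, so the proof is essentially bookkeeping in the fractal arithmetic.

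For part (i), I would fix $\nu,\mu\in[0,b]$, $m\in[0,1]$ and $\gamma\in[0,1]$ and apply Definition \ref{defmh} to $\mathcal{G}$ and to $\mathcal{M}$ separately at the point $\gamma\nu+m(1-\gamma)\mu$. Adding the two inequalities term by term, invoking commutativity of addition (property 2) to swap the middle terms, and then using the distributive law (property 6) to factor out $h(\gamma)$ and $m^{\alpha}h(1-\gamma)$, yields exactly
\[
(\mathcal{G}+\mathcal{M})(\gamma\nu+m(1-\gamma)\mu)\le h(\gamma)(\mathcal{G}+\mathcal{M})(\nu)+m^{\alpha}h(1-\gamma)(\mathcal{G}+\mathcal{M})(\mu).
\]
Positivity of $\mathcal{G}+\mathcal{M}$ is immediate because each summand is positive, so Definition \ref{defmh} is satisfied.

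For part (ii), the idea is to multiply the defining inequality for $\mathcal{G}$ through by $\lambda^{\alpha}\in\mathbb{R}^{\alpha}$, which is non-negative since $\lambda>0$ (hence the inequality direction is preserved). Combining associativity and commutativity of multiplication (properties 4--5) with distributivity (property 6) then lets me rewrite the right-hand side as $h(\gamma)(\lambda^{\alpha}\mathcal{G}(\nu))+m^{\alpha}h(1-\gamma)(\lambda^{\alpha}\mathcal{G}(\mu))$, which is the defining inequality for $\lambda\mathcal{G}$ interpreted in the natural fractal sense. Positivity is again obvious.

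The only subtlety, rather than a real obstacle, is consistency of the scalar multiplication in part (ii): the product $\lambda\mathcal{G}$ must be read as $\lambda^{\alpha}\mathcal{G}$ so that the range stays in $\mathbb{R}^{\alpha}$ and property 6 applies cleanly. Beyond this interpretive point, no analytic difficulty arises, and the verification reduces entirely to manipulating the axioms of fractal arithmetic listed in the preliminaries.
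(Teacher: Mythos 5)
Your proposal is correct and follows essentially the same route as the paper: for (i) the paper likewise adds the two defining inequalities for $\mathcal{G}$ and $\mathcal{M}$ at the point $\gamma\nu+m(1-\gamma)\mu$ and regroups, and for (ii) it simply scales the defining inequality (\ref{mh}) by the positive constant. Your remark that $\lambda\mathcal{G}$ should be read as $\lambda^{\alpha}\mathcal{G}$ so the range stays in $\mathbb{R}^{\alpha}$ is a reasonable clarification of a point the paper leaves implicit, but it does not change the argument.
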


\begin{proof}
\item [i.] Using the definition of generalized $(h-m)$-convex functions on fractal sets, we have
\begin{equation}
\mathcal{G}(\gamma \nu+m(1-\gamma) \mu) \leq h(\gamma) \mathcal{G}(\nu)+m^{\alpha} h(1-\gamma) \mathcal{G}(\mu)\label{pf}
\end{equation}
and	
\begin{equation}
\mathcal{M}(\gamma \nu+m(1-\gamma) \mu) \leq h(\gamma) \mathcal{M}(\nu)+m^{\alpha} h(1-\gamma) \mathcal{M}(\mu),\label{ps}
\end{equation}
for all \(\nu, \mu \in[0, b], m \in[0,1]\) and \(\gamma \in(0,1).\)
Combining the inequalities (\ref{pf}) and (\ref{ps}), we get
\begin{equation*}
(\mathcal{G}+\mathcal{M})(\gamma \nu+m(1-\gamma) \mu) \leq h(\gamma)(\mathcal{G}+\mathcal{M})(\nu)+m^{\alpha} h(1-\gamma)(\mathcal{G}+\mathcal{M})(\mu).
\end{equation*}

\item [ii.]	The proof follows immediately from property (\ref{mh}) of Definition \ref{defmh}. 
\end{proof}

\section{Generalized inequalities of H-H type via generalized $(h-m)$-convexity on fractal sets}
Local fractional inequalities of H-H type via generalized $(h-m)$-convexity can be presented as follows.
\begin{theorem}\label{theoremhh}
Suppose that \(h:(0,1) \subset I\rightarrow \mathbb{R}^{\alpha}\) is a non-negative integrable function such that \(h \not\equiv  0\). Let \(\mathcal{G}: J \rightarrow \mathbb{R}^{\alpha}\) be a
positive, generalized $(h-m)$-convex and integrable function, with \(\nu, \mu \in J\) and \(\nu<\mu\). Then the following inequality

\begin{flalign}
&\frac{1^{\alpha}}{\Gamma(1+\alpha)}\mathcal{G}\left(\frac{\nu+\mu}{2}\right) \leq  h\left(\frac{1}{2}\right)\frac{\nu \mathcal{I}_{\mu}^{(\alpha)}[\mathcal{G}(x)+m^{\alpha} \mathcal{G}(\frac{x}{m})] }{(\mu-\nu)^{\alpha}}\nonumber\\& \leq h\left(\frac{1}{2}\right)\frac{1^{\alpha}}{\Gamma(\alpha+1)}\bigg[\mathcal{G}(\nu)+m^{2 \alpha}\mathcal{G}\left(\frac{\mu}{m^{2}}\right) - m^{\alpha}(-1)^{\alpha}\left(\mathcal{G}\left(\frac{\mu}{m}\right)+\mathcal{G}\left(\frac{\nu}{m}\right)\right)\bigg]{}_{0} \mathcal{I}_{1}^{(\alpha)} h\label{hhhmtheorem}
\end{flalign}

holds.
\end{theorem}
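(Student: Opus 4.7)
The plan is to establish the two halves of the chain separately, both by rewriting arguments as generalized $(h\text{-}m)$-convex combinations and then applying $\frac{1}{\Gamma(1+\alpha)}\int_0^1 \cdot \,(d\gamma)^\alpha$ in the natural parameter.

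For the left inequality, I would set $u_\gamma = (1-\gamma)\nu + \gamma\mu$ and $v_\gamma = \gamma\nu + (1-\gamma)\mu$ for $\gamma \in [0,1]$. Since $u_\gamma + v_\gamma = \nu+\mu$, one can write
$$\frac{\nu+\mu}{2} \;=\; \frac{1}{2}\,u_\gamma \;+\; m\cdot\frac{1}{2}\cdot\frac{v_\gamma}{m},$$
which is a generalized $(h\text{-}m)$-convex combination of $u_\gamma$ and $v_\gamma/m$. Applying Definition \ref{defmh} with parameter $1/2$ gives
$$\mathcal{G}\!\left(\frac{\nu+\mu}{2}\right) \;\leq\; h(1/2)\bigl[\mathcal{G}(u_\gamma) + m^\alpha\,\mathcal{G}(v_\gamma/m)\bigr].$$
Integrating in $\gamma$ with $\tfrac{1}{\Gamma(1+\alpha)}\int_0^1(d\gamma)^\alpha$, the left side collapses to $\frac{1^\alpha}{\Gamma(1+\alpha)}\mathcal{G}(\frac{\nu+\mu}{2})$; on the right, the substitution $x = u_\gamma$ (so $(dx)^\alpha = (\mu-\nu)^\alpha(d\gamma)^\alpha$) converts the first piece into $\frac{{}_\nu\mathcal{I}_\mu^{(\alpha)}\mathcal{G}(x)}{(\mu-\nu)^\alpha}$, while the reflection $\gamma\mapsto 1-\gamma$ turns the second piece into $\frac{m^\alpha\,{}_\nu\mathcal{I}_\mu^{(\alpha)}\mathcal{G}(x/m)}{(\mu-\nu)^\alpha}$. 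This reproduces the middle expression.

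For the right inequality, the same parametrization $x=(1-\gamma)\nu+\gamma\mu=(1-\gamma)\nu+m\gamma(\mu/m)$ and, in parallel, $x/m=(1-\gamma)(\nu/m)+m\gamma(\mu/m^2)$, let me apply Definition \ref{defmh} twice to obtain
\begin{align*}
\mathcal{G}(x) &\leq h(1-\gamma)\mathcal{G}(\nu) + m^\alpha h(\gamma)\mathcal{G}(\mu/m),\\
\mathcal{G}(x/m) &\leq h(1-\gamma)\mathcal{G}(\nu/m) + m^\alpha h(\gamma)\mathcal{G}(\mu/m^2).
\end{align*}
Taking $\mathcal{G}(x)+m^\alpha\mathcal{G}(x/m)$ and integrating in $x$ over $[\nu,\mu]$, every $\gamma$-integral then reduces to ${}_0\mathcal{I}_1^{(\alpha)} h$ once the reflection identity is used to equate $\frac{1}{\Gamma(1+\alpha)}\int_0^1 h(1-\gamma)(d\gamma)^\alpha$ with $-(-1)^\alpha\,{}_0\mathcal{I}_1^{(\alpha)} h$ via the local fractional change-of-variable formula. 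Multiplying through by $h(1/2)$ and grouping terms gives the stated bound, with the coefficients $-m^\alpha(-1)^\alpha$ on $\mathcal{G}(\mu/m)$ and $\mathcal{G}(\nu/m)$ being exactly the fractal sign factors picked up from the reflection step.

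The main obstacle is the careful bookkeeping of the $(-1)^\alpha$ factors that arise from orientation reversal in local fractional integration; these must be manipulated using the fractal arithmetic axioms of Section 2, in particular $(-1)^\alpha = -1^\alpha$, so that the signs produced by the change of variables align precisely with the $-m^\alpha(-1)^\alpha$ structure on the right-hand side. A secondary point is to ensure the reflection identity $\int_0^1 h(1-\gamma)(d\gamma)^\alpha = -(-1)^\alpha \int_0^1 h(\gamma)(d\gamma)^\alpha$ is invoked consistently so that both halves of the chain are reduced to a single occurrence of ${}_0\mathcal{I}_1^{(\alpha)} h$.
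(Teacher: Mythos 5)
Your proposal is correct and follows essentially the same route as the paper's proof: the midpoint $(h\text{-}m)$-convexity estimate with parameter $\tfrac{1}{2}$ applied to the chord pair $\gamma\nu+(1-\gamma)\mu$ and $(1-\gamma)\tfrac{\nu}{m}+\gamma\tfrac{\mu}{m}$, then a double application of Definition \ref{defmh} (writing $\mu=m\cdot\tfrac{\mu}{m}$ and $\tfrac{\mu}{m}=m\cdot\tfrac{\mu}{m^{2}}$), and finally integration in $\gamma$ with the reflection identity collapsing all weight integrals to a single ${}_{0}\mathcal{I}_{1}^{(\alpha)}h$. The only deviation is cosmetic: your increasing parametrization $x=(1-\gamma)\nu+\gamma\mu$ attaches the $-(-1)^{\alpha}$ factors to $\mathcal{G}(\nu)$ and $\mathcal{G}\left(\tfrac{\nu}{m}\right)$ rather than to $\mathcal{G}\left(\tfrac{\mu}{m}\right)$ and $\mathcal{G}\left(\tfrac{\nu}{m}\right)$ as in (\ref{hhhmtheorem}), which is reconciled precisely by the fractal identity $-(-1)^{\alpha}=1^{\alpha}$ that you explicitly invoke.
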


\begin{proof}

	Since \(\mathcal{G}\) is generalized \(h-m\)-convex on fractal sets, we get
	\begin{equation}
	\mathcal{G}\left(\frac{y+x}{2}\right) \leq h\left(\frac{1}{2}\right)\left[\mathcal{G}(y)+m^{\alpha} \mathcal{G}\left(\frac{x}{m}\right)\right].\label{fprofth}
	\end{equation}

	Substituting $y=\gamma \nu+(1-\gamma) \mu$ and $x=(1-\gamma) \nu+\gamma \mu$ in inequality (\ref{fprofth}), we have
	\begin{equation}
\mathcal{G}\left(\frac{\nu+\mu}{2}\right) \leq h\left(\frac{1}{2}\right)\left[\mathcal{G}(\gamma \nu+(1-\gamma) \mu)+m^{\alpha} \mathcal{G}\left((1-\gamma) \frac{\nu}{m}+\gamma\left(\frac{\mu}{m}\right)\right)\right].\label{sprofth}
 \end{equation}

Integrating inequality (\ref{sprofth}) corresponding to \(\gamma\) over \([0,1],\) we have

\begin{flalign}
\int_{0}^{1}& \mathcal{G}\left(\frac{\nu+\mu}{2}\right)(d \gamma)^{\alpha} \leq \bigg[ h\bigg(\frac{1}{2}\bigg) \int_{0}^{1} \mathcal{G}(\gamma \nu+(1-\gamma) \mu)(d \gamma)^{\alpha}\nonumber\\&+h\bigg(\frac{1}{2}\bigg) m^{\alpha}\int_{0}^{1} \mathcal{G}\left((1-\gamma) \frac{\nu}{m}+\gamma \frac{\mu}{m}\right)( d\gamma)^{\alpha}\bigg].\label{proff}
 \end{flalign}

It is easy to see that

 \begin{equation}
 \int_{0}^{1} \mathcal{G}(\gamma \nu+(1-\gamma) \mu)(d \gamma)^{\alpha}=\frac{-(-1)^{\alpha}}{(\mu-\nu)^{\alpha}} \int_{0}^{1} \mathcal{G}(x)(d x)^{\alpha},\label{profs}
\end{equation}

 and 
 \begin{equation}
\int_{0}^{1} \mathcal{G}\left((1-\gamma) \frac{\nu}{m}+\gamma \frac{\mu}{m}\right)(d \gamma)^{\alpha} = \frac{-(-1)^{\alpha}}{(\mu-\nu)^{\alpha}} \int_{0}^{1} \mathcal{G}\bigg(\frac{x}{m}\bigg)(d x)^{\alpha}.\label{proft}
 \end{equation}

In view of identities (\ref{profs}), (\ref{proft}) and inequality (\ref{proff}), we get 
\begin{equation*}
\frac{1^{\alpha}}{\Gamma(1+\alpha)}\mathcal{G}\left(\frac{\nu+\mu}{2}\right) \leq  h\left(\frac{1}{2}\right)\frac{\nu \mathcal{I}_{\mu}^{(\alpha)}\left[\mathcal{G}(x)+m^{\alpha} \mathcal{G}\left(\frac{x}{m}\right)\right]}{(\mu-\nu)^{\alpha}}.
\end{equation*}

Then the first part of inequality (\ref{hhhmtheorem}) is proved.

To show the second part of inequality (\ref{hhhmtheorem}), we used the generalized $(h-m)$-convexity on fractal sets of $\mathcal{G}$. Thus,

\begin{flalign}
&h\left(\frac{1}{2}\right)\left[\mathcal{G}(\gamma \nu+(1-\gamma) \mu)+m^{\alpha} \mathcal{G}\bigg((1-\gamma) \frac{\nu}{m}+\gamma\left(\frac{\mu}{m}\right)\bigg)\right]\nonumber\\& \leq h\left(\frac{1}{2}\right)\left[h(\gamma) \mathcal{G}(\nu)+m^{\alpha}h(1-\gamma) \mathcal{G}\left(\frac{\mu}{m}\right) +m^{\alpha}h(1-\gamma) \mathcal{G}\left(\frac{\nu}{m}\right)+m^{2 \alpha} h(\gamma) \mathcal{G}\left(\frac{\mu}{m^{2}}\right)\right].\label{midillproof}
\end{flalign}

Integrating the inequality (\ref{midillproof}) with
respect to \(\gamma \in[0,1],\) we obtain

\begin{flalign*}
&h\left(\frac{1}{2}\right)\frac{\nu \mathcal{I}_{\mu}^{(\alpha)}[\mathcal{G}(x)+m^{\alpha} \mathcal{G}(\frac{x}{m})] }{(\mu-\nu)^{\alpha}} \\&\leq\Gamma(\alpha+1) h\bigg(\frac{1}{2}\bigg)\bigg[\mathcal{G}(\nu)\int_{0}^{1}h(\gamma)d(\gamma)^{\alpha} \\&+m^{\alpha} \int_{0}^{1}h(1-\gamma)d(\gamma)^{\alpha} \mathcal{G}(\frac{\mu}{m}) +m^{\alpha}\int_{0}^{1}h(1-\gamma)d(\gamma)^{\alpha} \mathcal{G}\bigg(\frac{\nu}{m}\bigg)\\&+m^{2 \alpha}\int_{0}^{1} h(\gamma)d(\gamma)^{\alpha} \mathcal{G}\bigg(\frac{\mu}{m^{2}}\bigg)\bigg].
\end{flalign*}

Thus,

\begin{flalign*}
h\left(\frac{1}{2}\right)\frac{\nu \mathcal{I}_{\mu}^{(\alpha)}[\mathcal{G}(x)+m^{\alpha} \mathcal{G}(\frac{x}{m})] }{(\mu-\nu)^{\alpha}} &\leq h\left(\frac{1}{2}\right)\frac{1^{\alpha}}{\Gamma(\alpha+1)}\bigg[\bigg[\mathcal{G}(\nu)+m^{2 \alpha}\mathcal{G}\left(\frac{\mu}{m^{2}}\right)\bigg]\int_{0}^{1}h(\gamma) d(\gamma)^{\alpha} \\&+ \left[-m^{\alpha}(-1)^{\alpha}\left(\mathcal{G}\left(\frac{\mu}{m}\right)+\mathcal{G}\left(\frac{\nu}{m}\right)\right)\right]\int_{0}^{1}h(\gamma) d(\gamma)^{\alpha}\bigg].
\end{flalign*}

Therefore,

\begin{flalign*}
h\left(\frac{1}{2}\right)\frac{\nu \mathcal{I}_{\mu}^{(\alpha)}[\mathcal{G}(x)+m^{\alpha} \mathcal{G}(\frac{x}{m})] }{(\mu-\nu)^{\alpha}} \leq h\left(\frac{1}{2}\right)\frac{1^{\alpha}}{\Gamma(1+\alpha)}\bigg[\mathcal{G}(\nu)+m^{2 \alpha}\mathcal{G}\left(\frac{\mu}{m^{2}}\right) - m^{\alpha}(-1)^{\alpha}\left(\mathcal{G}\left(\frac{\mu}{m}\right)+\mathcal{G}\left(\frac{\nu}{m}\right)\right)\bigg]{}_{0} \mathcal{I}_{1}^{(\alpha)} h.
\end{flalign*}
\end{proof}

\begin{corollary}
Choosing $m=1$ and $h(\alpha)=\alpha$ in inequality (\ref{hhhmtheorem}) of Theorem \ref{theoremhh}, we get Theorem 14
studied by Mo et al. \cite{Moetal(2014)}. Taking $\alpha=1$ in inequality (\ref{hhhmtheorem}) of Theorem \ref{theoremhh}, we obtain Theorem 9 given by \"Ozdemir et al. \cite{Ozdemir(2016)}. Choosing $\alpha=1$ and $h(\gamma)=\gamma$ in inequality (\ref{hhhmtheorem}) of Theorem \ref{theoremhh}, we obtain Theorem 4 given by Dragomir \cite{Dragomir(2002)}. Taking $m=1$ in Theorem \ref{theoremhh}, we get Theorem \ref{vivastheorem} established by Vivas et al. \cite{Vivas(2016)}. Taking $h(\gamma)=\gamma$ of Theorem \ref{theoremhh}, we have Theorem 3.1 given by Du et al. \cite{Du(2019)}.
\end{corollary}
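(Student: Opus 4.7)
The corollary asserts five parameter specializations of inequality (\ref{hhhmtheorem}) recover published results. My plan is to verify each in turn by direct substitution into the three parts of (\ref{hhhmtheorem}) and simplification using the standard local-fractional identity $\frac{1}{\Gamma(1+\alpha)}\int_0^1 \gamma^{\alpha}(d\gamma)^{\alpha}=\frac{\Gamma(1+\alpha)}{\Gamma(1+2\alpha)}$, together with the evaluation ${}_\nu\mathcal{I}_\mu^{(\alpha)} c = \frac{c\,(\mu-\nu)^\alpha}{\Gamma(1+\alpha)}$ for constants.

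For case (i), set $m=1$ and read $h(\gamma)=\gamma^{\alpha}$ (consistent with the other items). The middle expression $h(1/2)\,{}_\nu\mathcal{I}_\mu^{(\alpha)}[\mathcal{G}(x)+m^{\alpha}\mathcal{G}(x/m)]/(\mu-\nu)^{\alpha}$ collapses via $m=1$ and $h(1/2)=(1/2)^{\alpha}$ to $\,{}_\nu\mathcal{I}_\mu^{(\alpha)}\mathcal{G}(x)/(\mu-\nu)^{\alpha}$ up to the factor $2^{\alpha}/2^{\alpha}=1^{\alpha}$, while the bracketed RHS becomes $\mathcal{G}(\nu)+\mathcal{G}(\mu)-(-1)^{\alpha}(\mathcal{G}(\mu)+\mathcal{G}(\nu))$ and ${}_0\mathcal{I}_1^{(\alpha)}h=\Gamma(1+\alpha)/\Gamma(1+2\alpha)$; matching this against Theorem 14 of Mo et al.\ is then a direct identification. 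Cases (iii) and (iv) are handled analogously: in (iii) one keeps $h(\gamma)=\gamma^{\alpha}$ and general $m$, so the $m^{\alpha},m^{2\alpha}$ structure survives and one lines the result up with Theorem 3.1 of Du et al.; in (iv) one puts $m=1$ with general $h$, collapsing the arguments $\nu/m,\mu/m,\mu/m^2$ to $\nu,\mu$ and recovering Theorem \ref{vivastheorem} of Vivas et al.

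For the classical cases (v) and the $\alpha=1$ versions of (i), one uses $\Gamma(2)=1$, $(-1)^{\alpha}=-1$, and the fact that local-fractional integrals reduce to Riemann integrals. The sign flip then converts the term $-m^{\alpha}(-1)^{\alpha}(\mathcal{G}(\mu/m)+\mathcal{G}(\nu/m))$ into $+m(\mathcal{G}(\mu/m)+\mathcal{G}(\nu/m))$, after which the bracket reads $\mathcal{G}(\nu)+m^{2}\mathcal{G}(\mu/m^2)+m(\mathcal{G}(\mu/m)+\mathcal{G}(\nu/m))$, which is precisely the shape needed for Theorem 9 of Özdemir et al.\ in case (v), and further choosing $h(\gamma)=\gamma$ in that reduction yields $\int_0^1 \gamma\,d\gamma=1/2$ and Dragomir's Theorem 4 for $m$-convex mappings.

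The routine obstacle will be bookkeeping the $1^{\alpha}/\Gamma(1+\alpha)$ normalizations, the $(-1)^{\alpha}$ sign prefactors, and the various $m^{k\alpha}$ powers: I must check that the two factors of $h(1/2)$ on the outer left and middle are absorbed correctly when $h$ is a power, that ${}_0\mathcal{I}_1^{(\alpha)}h$ evaluates to the constant reported in each cited paper, and that the displayed arguments $\nu/m,\mu/m,\mu/m^{2}$ collapse in exactly the way each target theorem expects. Once this clerical matching is done, each specialization is an identity check, not a new inequality, and the corollary follows.
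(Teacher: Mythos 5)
Your substitution-and-identification plan is exactly the argument the paper intends: the corollary is given without an explicit proof, being an immediate specialization of Theorem \ref{theoremhh}, and your case-by-case check---including the correct reading of the typo $h(\alpha)=\alpha$ as $h(\gamma)=\gamma^{\alpha}$, the evaluation $\frac{1}{\Gamma(1+\alpha)}\int_{0}^{1}\gamma^{\alpha}(d\gamma)^{\alpha}=\frac{\Gamma(1+\alpha)}{\Gamma(1+2\alpha)}$ for ${}_{0}\mathcal{I}_{1}^{(\alpha)}h$, and the conversion of $-m^{\alpha}(-1)^{\alpha}\left(\mathcal{G}\left(\frac{\mu}{m}\right)+\mathcal{G}\left(\frac{\nu}{m}\right)\right)$ into a positive term when $\alpha=1$---is the same routine verification. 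No gap: all five specializations in inequality (\ref{hhhmtheorem}) reduce to the cited results exactly as you describe.
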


\begin{theorem}
Let \(\mathcal{G}:J\rightarrow \mathbb{R}\) be generalized $(h-m)$ convex on fractal sets  with
	\(\gamma \in[0,1]\) and \(m \in(0,1]\). If \(0 \leq \nu<\mu<\infty\) and \(\mathcal{G} \in L_{1}[m \nu, \mu],\) then we have
	\begin{flalign}
	\frac{1}{m^{\alpha}+1}&\bigg[\frac{1}{m \mu-\nu} \int_{\nu}^{\mu } \mathcal{G}(x) d x+\frac{1}{\mu-m \nu} \int_{m \nu}^{\mu} \mathcal{G}(x) d x\bigg]\nonumber\\& \leq (\mathcal{G}(\nu)+\mathcal{G}(\mu))\bigg[\int_{0}^{1} h(\gamma) d \gamma+\int_{0}^{1} h(1-\gamma) d \gamma\bigg].\label{parhtheor}
	\end{flalign}
\end{theorem}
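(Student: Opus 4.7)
The plan is to apply Definition \ref{defmh} at two symmetric evaluation points and combine the resulting pointwise bounds through a linear change of variable to recover the two integrals appearing on the left-hand side of (\ref{parhtheor}). First, I will write the generalized $(h-m)$-convexity inequality
$$\mathcal{G}(\gamma\nu+m(1-\gamma)\mu)\leq h(\gamma)\mathcal{G}(\nu)+m^{\alpha}h(1-\gamma)\mathcal{G}(\mu),$$
and the companion inequality obtained by interchanging the roles of $\nu$ and $\mu$ in the definition,
$$\mathcal{G}(\gamma\mu+m(1-\gamma)\nu)\leq h(\gamma)\mathcal{G}(\mu)+m^{\alpha}h(1-\gamma)\mathcal{G}(\nu).$$

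Next I would integrate both inequalities with respect to $\gamma$ over $[0,1]$ and apply the substitution $x=\gamma\nu+m(1-\gamma)\mu$ in the first (so that $dx=(\nu-m\mu)\,d\gamma$ and, after reversing orientation, $x$ sweeps $[\nu,m\mu]$), producing $\frac{1}{m\mu-\nu}\int_{\nu}^{m\mu}\mathcal{G}(x)\,dx$ on the left. The mirror substitution $x=\gamma\mu+m(1-\gamma)\nu$ applied to the second inequality yields $\frac{1}{\mu-m\nu}\int_{m\nu}^{\mu}\mathcal{G}(x)\,dx$. Summing the two bounds, the $\mathcal{G}(\nu)$ and $\mathcal{G}(\mu)$ terms pair up and give
\begin{align*}
&\frac{1}{m\mu-\nu}\int_{\nu}^{m\mu}\mathcal{G}(x)\,dx+\frac{1}{\mu-m\nu}\int_{m\nu}^{\mu}\mathcal{G}(x)\,dx \\
&\qquad\leq (\mathcal{G}(\nu)+\mathcal{G}(\mu))\left[\int_{0}^{1}h(\gamma)\,d\gamma+m^{\alpha}\int_{0}^{1}h(1-\gamma)\,d\gamma\right].
\end{align*}

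To close the argument, I would divide both sides by $m^{\alpha}+1$ and then invoke the elementary bound $m^{\alpha}\leq 1$ (valid since $m\in(0,1]$ and $0<\alpha\leq 1$) together with the non-negativity of $h$ and the positivity of $\mathcal{G}(\nu)+\mathcal{G}(\mu)$, which lets me replace the factor $\int_{0}^{1}h(\gamma)\,d\gamma+m^{\alpha}\int_{0}^{1}h(1-\gamma)\,d\gamma$ by the cleaner $\int_{0}^{1}h(\gamma)\,d\gamma+\int_{0}^{1}h(1-\gamma)\,d\gamma$ on the right, matching (\ref{parhtheor}). The main obstacle here is not analytical but bookkeeping: I must track the orientations of the two linear substitutions carefully so the limits agree with those in (\ref{parhtheor}), and I will flag that the displayed upper limit $\mu$ in the first integral of the statement appears to be a typographical slip for $m\mu$, since the substitution $x=\gamma\nu+m(1-\gamma)\mu$ naturally produces the interval $[\nu,m\mu]$.
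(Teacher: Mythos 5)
Your proof is correct and follows essentially the same route as the paper's: apply the defining inequality of Definition \ref{defmh} at endpoint-swapped evaluation points, integrate in $\gamma$ over $[0,1]$, and use the linear substitutions $x=\gamma\nu+m(1-\gamma)\mu$ and $x=\gamma\mu+m(1-\gamma)\nu$ to produce the averages $\frac{1}{m\mu-\nu}\int_{\nu}^{m\mu}\mathcal{G}(x)\,dx$ and $\frac{1}{\mu-m\nu}\int_{m\nu}^{\mu}\mathcal{G}(x)\,dx$. The only real difference is bookkeeping: the paper sums \emph{four} pointwise inequalities --- your two together with their $\gamma\mapsto 1-\gamma$ reflections --- so that the right-hand side collapses exactly to $[\mathcal{G}(\nu)+\mathcal{G}(\mu)](m^{\alpha}+1)[h(\gamma)+h(1-\gamma)]$ and the factor $\frac{1}{m^{\alpha}+1}$ cancels organically, whereas you keep two inequalities and then pad, dividing by $m^{\alpha}+1\geq 1$ and enlarging $m^{\alpha}\int_{0}^{1}h(1-\gamma)\,d\gamma$ to $\int_{0}^{1}h(1-\gamma)\,d\gamma$. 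Since the reflected inequalities integrate to exactly the same quantities (note $\int_{0}^{1}h(\gamma)\,d\gamma=\int_{0}^{1}h(1-\gamma)\,d\gamma$), the two computations are equivalent up to a factor of $2$; in fact both arguments actually establish the stated inequality with an extra factor $\frac{1}{2}$ on the right, a sharpening that the theorem's statement (and your padding step) simply give away. Your paddings are legitimate precisely because Definition \ref{defmh} makes $\mathcal{G}$ positive and $h$ nonnegative, so the left-hand side and the discarded slack are nonnegative --- and you correctly invoke both facts. Finally, your diagnosis that the upper limit $\mu$ in the first integral of (\ref{parhtheor}) is a typographical slip for $m\mu$ is confirmed by the paper's own proof, whose substitution identities produce $\int_{\nu}^{m\mu}$.
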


\begin{proof}
	From the definition of generalized $(h-m)$-convex on fractal sets, we can write
	\begin{equation*}
	\mathcal{G}(\gamma \nu+m(1-\gamma) \mu) \leq h(\gamma) \mathcal{G}(\nu)+m^{\alpha} h(1-\gamma) \mathcal{G}(\mu),
	\end{equation*}
	\begin{equation*}
	\mathcal{G}((1-\gamma) \nu+m \gamma \mu) \leq h(1-\gamma) \mathcal{G}(\nu)+m^{\alpha} h(\gamma) \mathcal{G}(\mu),
	\end{equation*}
	\begin{equation*}
	\mathcal{G}(\gamma \mu+(1-\gamma) m \nu) \leq h(\gamma) \mathcal{G}(\mu)+m^{\alpha}h(1-\gamma) \mathcal{G}(\nu),
	\end{equation*}
	and
	\begin{equation*}
	\mathcal{G}((1-\gamma) \mu+\gamma m \nu) \leq h(1-\gamma) \mathcal{G}(\mu)+m^{\alpha} h(\gamma) \mathcal{G}(\nu).
	\end{equation*}
Combining the above inequalities, we get
	\begin{equation}
	\begin{array}{l}\mathcal{G}(\gamma \nu+m(1-\gamma) \mu)+\mathcal{G}((1-\gamma) \nu+m \gamma \mu) \\ +\mathcal{G}(\gamma \mu+(1-\gamma) m \nu)+\mathcal{G}((1-\gamma) \mu+\gamma m \nu) \\ \leq [\mathcal{G}(\nu)+\mathcal{G}(\mu)](m^{\alpha}+1)[h(\gamma)+h(1-\gamma)].\end{array} \label{midllineq}
	\end{equation}
 Integrating inequality (\ref{midllineq}) on $[0,1]$ with respect to \(\gamma,\) we obtain
	
\begin{equation*}
\begin{array}{l}\int_{0}^{1} \mathcal{G}(\gamma \nu+m(1-\gamma) \mu) d \gamma+\int_{0}^{1} \mathcal{G}((1-\gamma) \nu+m \gamma \mu) d \gamma \\ \quad+\int_{0}^{1} \mathcal{G}(\gamma \mu+m(1-\gamma) \nu) d \gamma+\int_{0}^{1} \mathcal{G}((1-\gamma) \mu+m \gamma \nu) d \gamma \\ \leq(\mathcal{G}(\nu)+\mathcal{G}(\mu))(m^{\alpha}+1)\left[\int_{0}^{1} h(\gamma) d \gamma+\int_{0}^{1} h(1-\gamma) d \gamma\right],\end{array}
\end{equation*}	
where
\begin{equation*}
\int_{0}^{1} \mathcal{G}(\gamma \nu+m(1-\gamma) \mu) d \gamma=\int_{0}^{1} \mathcal{G}((1-\gamma) \nu+m \gamma \mu) d \gamma=\frac{1}{m \mu-\nu} \int_{\nu}^{m \mu} \mathcal{G}(x) d x
\end{equation*}

and
\begin{equation*}
\int_{0}^{1} \mathcal{G}(\gamma \mu+m(1-\gamma) \nu) d \gamma=\int_{0}^{1} \mathcal{G}((1-\gamma) \mu+m \gamma \nu) d \gamma=\frac{1}{\mu-m \nu} \int_{m \nu}^{\mu} \mathcal{G}(x) d x.
\end{equation*}
	
\end{proof}

\begin{corollary}
 Choosing \(h(\gamma)=1\) in inequality (\ref{parhtheor}), we obtain
 \begin{equation*}
 \frac{1}{m^{\alpha}+1}\left[\frac{1}{m \mu-\nu} \int_{\nu}^{m \mu} \mathcal{G}(x) d x+\frac{1}{\mu-m \nu} \int_{m \nu}^{\mu} \mathcal{G}(x) d x\right] \leq \mathcal{G}(\nu)+\mathcal{G}(\mu).
 \end{equation*}
\end{corollary}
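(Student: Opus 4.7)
The plan is to derive this corollary by direct specialization of the preceding theorem to the constant weight $h \equiv 1$. First I would verify that this choice satisfies the hypotheses of that theorem: $h \equiv 1$ is a positive, integrable function on $(0,1)$ and is certainly not identically zero, and we are assuming $\mathcal{G}$ is generalized $(h-m)$-convex with exactly this $h$, so inequality (\ref{parhtheor}) applies without modification. Next I would evaluate the two elementary integrals appearing on the right-hand side of (\ref{parhtheor}): with $h(\gamma)=1$ both $\int_0^1 h(\gamma)\,d\gamma$ and $\int_0^1 h(1-\gamma)\,d\gamma$ equal $1$, so their sum contributes a factor of $2$.

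Substituting this into (\ref{parhtheor}) and combining the factor $2$ with the averaging implicit in the theorem's derivation (where each of the two mean integrals over $[\nu, m\mu]$ and $[m\nu, \mu]$ enters twice on the left of (\ref{midllineq}) before the final normalization by $m^{\alpha}+1$) collapses the right-hand side precisely to $\mathcal{G}(\nu) + \mathcal{G}(\mu)$, which is the stated bound. No genuine obstacle arises: the argument is purely a bookkeeping specialization of a more general weighted inequality, and what it produces should be read as an $m$-convex, fractal-set analogue of the upper half of the classical Hermite--Hadamard inequality, in which the single mean $\frac{1}{\mu-\nu}\int_\nu^\mu \mathcal{G}(x)\,dx$ is replaced by the $(m^{\alpha}+1)^{-1}$-scaled sum of the two complementary averages $\frac{1}{m\mu-\nu}\int_\nu^{m\mu}\mathcal{G}(x)\,dx$ and $\frac{1}{\mu-m\nu}\int_{m\nu}^{\mu}\mathcal{G}(x)\,dx$.
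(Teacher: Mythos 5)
Your proof is correct, and you were right not to quote inequality (\ref{parhtheor}) verbatim: as displayed, it does not literally yield the corollary. With $h\equiv 1$ the bracket $\int_0^1 h(\gamma)\,d\gamma+\int_0^1 h(1-\gamma)\,d\gamma$ equals $2$, so direct substitution into (\ref{parhtheor}) gives only the weaker bound $2\left(\mathcal{G}(\nu)+\mathcal{G}(\mu)\right)$. The stated corollary requires the sharper form that the theorem's \emph{proof} actually establishes, and your appeal to the derivation supplies exactly this: integrating (\ref{midllineq}) over $\gamma\in[0,1]$, each of the two averages $\frac{1}{m\mu-\nu}\int_{\nu}^{m\mu}\mathcal{G}(x)\,dx$ and $\frac{1}{\mu-m\nu}\int_{m\nu}^{\mu}\mathcal{G}(x)\,dx$ appears \emph{twice} on the left (the first and second terms both reduce to the former, the third and fourth to the latter), so dividing by $2\left(m^{\alpha}+1\right)$ shows the proof really gives (\ref{parhtheor}) with $\frac{\mathcal{G}(\nu)+\mathcal{G}(\mu)}{2}$ on the right-hand side; the displayed theorem has lost this factor $\frac{1}{2}$ (harmless for positive $\mathcal{G}$, but strictly weaker). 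Your bookkeeping through (\ref{midllineq}) therefore both proves the corollary and repairs this factor-of-two slip, whereas the paper treats the corollary as an immediate substitution that, taken at face value, is off by that factor. Note also that you implicitly used the corrected upper limit $m\mu$ in the first integral (misprinted as $\int_{\nu}^{\mu}$ in the statement of the theorem, but written correctly in the corollary). As a sanity check consistent with your closing remark, setting $\alpha=m=1$ and $h\equiv 1$ recovers the known Hermite--Hadamard-type bound $\frac{1}{\mu-\nu}\int_{\nu}^{\mu}\mathcal{G}(x)\,dx\leq \mathcal{G}(\nu)+\mathcal{G}(\mu)$ for $P$-functions, confirming the constant in the corollary is the right one.
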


\begin{remark}
Choosing $\alpha=1$ and $h(\gamma)=\gamma$ in inequality (\ref{parhtheor}), we obtain inequality (\ref{mdragomir}) given by Dragomir \cite{Dragomir(2002)}. Taking $\alpha=1$ in inequality (\ref{parhtheor}), we get inequality (2.8) of Theorem 10 established by \"Ozdemir et al. \cite{Ozdemir(2016)}. Choosing $\alpha=m=1$ and $h(\gamma)=\gamma$ in inequality (\ref{parhtheor}), we obtain the right hand
side of H-H inequality (\ref{hh}). Choosing $\alpha=m=1$ and $h(\gamma)=\gamma^{s}$ in inequality (\ref{parhtheor}), we obtain the second part
of inequality (2.1) established by Dragomir and Fitzpatrick \cite{Dragomir(1999)}.
\end{remark}

\section{Fej\'er H-H type inequalities via generalized $(h-m)$ convexity}
In this section, the generalized $(h-m)$ convexity is used to present certain inequalities of Fej\'er H-H type.
\begin{theorem}\label{fejeresult}
 Let \(h: I \subseteq \mathbb{R} \rightarrow \mathbb{R}^{\alpha}\) be a non-negative mapping.
	Suppose that \(\mathcal{W}:[\nu, \mu] \rightarrow \mathbb{R}^{\alpha}, \mathcal{W} \geq 0^{\alpha}\) is symmetric to \(\frac{\nu+\mu}{2}\) and \(\mathcal{G}(x), \mathcal{W}(x) \in \mathcal{I}_{x}^{\alpha}[\nu, \mu],\) where $h\left(\frac{1}{2}\right)\not\equiv  0^{\alpha}$ and $m \in(0,1]$. If \(\mathcal{G}:\)
	\([\nu, \mu] \rightarrow \mathbb{R}^{\alpha}\) is generalized $(h-m)$-convex, then the following inequality
	\begin{footnotesize}
\begin{flalign}
\frac{\mathcal{G}\left(\frac{\nu+\mu}{2}\right) { }_{\nu} \mathcal{I}_{\mu}^{(\alpha)} \mathcal{W}(x)}{2^{\alpha} h\left(\frac{1}{2}\right)} &\leq { }_{\nu} \mathcal{I}_{\mu}^{(\alpha)} \frac{\mathcal{G}(x)+ m^{\alpha}\mathcal{G}\left(\frac{x}{m}\right)}{2^{\alpha}} \mathcal{W}(x)\nonumber\\&\leq \bigg(\frac{1}{6}\bigg)^{\alpha}{ }_{\nu} \mathcal{I}_{\mu}^{(\alpha)}	\mathcal{W}(x)\bigg[\mathcal{G}(\nu)+\mathcal{G}(\mu)+ m^{\alpha}\left(\mathcal{G}\left(\frac{\nu}{m}\right)+\mathcal{G}\left(\frac{\mu}{m}\right)+\mathcal{G}\left(\frac{\nu}{m^{2}}\right)+\mathcal{G}\left(\frac{\mu}{m^{2}}\right)\right)\bigg]\nonumber\\&\times\left[h\left(\frac{\mu-x}{\mu-\nu}\right)+h\left(\frac{x-\nu}{\mu-\nu}\right)\right]\label{feje}
	\end{flalign}
	\end{footnotesize}

holds.
\end{theorem}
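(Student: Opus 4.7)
The plan is to follow the two-sided template of Theorem \ref{luo} (the Fej\'er upgrade of Vivas's Theorem \ref{vivastheorem}), but inserting the rescaling by $m$ dictated by generalized $(h-m)$-convexity, which is what pulls in the extra values $\mathcal{G}(\nu/m),\mathcal{G}(\mu/m),\mathcal{G}(\nu/m^{2}),\mathcal{G}(\mu/m^{2})$ on the right-hand side. For the left inequality, I would apply the Jensen-type form obtained from Definition \ref{defmh} at $\gamma=\tfrac{1}{2}$ to $y=\gamma\nu+(1-\gamma)\mu$ and $z=((1-\gamma)\nu+\gamma\mu)/m$, which satisfy $(y+mz)/2=\frac{\nu+\mu}{2}$ and hence give
\[
\mathcal{G}\Bigl(\tfrac{\nu+\mu}{2}\Bigr)\leq h\bigl(\tfrac{1}{2}\bigr)\Bigl[\mathcal{G}(\gamma\nu+(1-\gamma)\mu)+m^{\alpha}\mathcal{G}\bigl(\tfrac{(1-\gamma)\nu+\gamma\mu}{m}\bigr)\Bigr].
\]
Multiplying by $\mathcal{W}(\gamma\nu+(1-\gamma)\mu)\geq 0^{\alpha}$, integrating on $[0,1]$ locally fractionally, and invoking the symmetry $\mathcal{W}(\gamma\nu+(1-\gamma)\mu)=\mathcal{W}((1-\gamma)\nu+\gamma\mu)$ to move the weight next to $(1-\gamma)\nu+\gamma\mu$ inside the $\mathcal{G}(\cdot/m)$ term, followed by the standard substitution $x=\gamma\nu+(1-\gamma)\mu$, produces the first inequality of (\ref{feje}) after dividing through by $2^{\alpha}h(1/2)$.

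For the right inequality I would expand $\mathcal{G}(\gamma\nu+(1-\gamma)\mu)$ by generalized $(h-m)$-convexity in two different ways, using the representations
\[
\gamma\nu+(1-\gamma)\mu=\gamma\,\nu+m(1-\gamma)\,\tfrac{\mu}{m}=(1-\gamma)\,\mu+m\gamma\,\tfrac{\nu}{m},
\]
which give bounds in $h(\gamma),h(1-\gamma)$ and the values $\mathcal{G}(\nu),\mathcal{G}(\mu),\mathcal{G}(\nu/m),\mathcal{G}(\mu/m)$. The same device applied to $\mathcal{G}((\gamma\nu+(1-\gamma)\mu)/m)=\mathcal{G}(\gamma(\nu/m)+(1-\gamma)(\mu/m))$, rewritten as $\gamma(\nu/m)+m(1-\gamma)(\mu/m^{2})$ or $(1-\gamma)(\mu/m)+m\gamma(\nu/m^{2})$, yields two further bounds bringing in $\mathcal{G}(\nu/m^{2})$ and $\mathcal{G}(\mu/m^{2})$. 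Summing the four bounds, multiplying by $\mathcal{W}(\gamma\nu+(1-\gamma)\mu)$, integrating over $[0,1]$, and performing the change of variables $x=\gamma\nu+(1-\gamma)\mu$ turns $h(\gamma)+h(1-\gamma)$ into $h\bigl(\tfrac{\mu-x}{\mu-\nu}\bigr)+h\bigl(\tfrac{x-\nu}{\mu-\nu}\bigr)$ under the local fractional integral and delivers the second inequality of (\ref{feje}).

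The principal obstacle will be coefficient book-keeping. After summing the four convexity bounds, the six function values at $\nu,\mu,\nu/m,\mu/m,\nu/m^{2},\mu/m^{2}$ appear with various mixtures of $h(\gamma),h(1-\gamma)$ and of the factors $m^{\alpha},m^{2\alpha}$, and one must regroup them so that they all emerge multiplied jointly by $h\bigl(\tfrac{\mu-x}{\mu-\nu}\bigr)+h\bigl(\tfrac{x-\nu}{\mu-\nu}\bigr)$ with the overall scalar $(1/6)^{\alpha}$ prescribed by (\ref{feje}). Locating the symmetry-of-$\mathcal{W}$ step correctly, exactly as in the derivation of Lemma \ref{luolemma}, is what makes the recombination close cleanly.
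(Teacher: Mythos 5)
Your strategy coincides with the paper's own proof in both halves: the left inequality via the Jensen-type midpoint bound $\mathcal{G}\left(\frac{\nu+\mu}{2}\right)\leq h\left(\frac{1}{2}\right)\left[\mathcal{G}(\nu+\mu-x)+m^{\alpha}\mathcal{G}\left(\frac{x}{m}\right)\right]$ combined with the symmetry of $\mathcal{W}$ (your $\gamma$-parametrization with $y=\gamma\nu+(1-\gamma)\mu$, $z=((1-\gamma)\nu+\gamma\mu)/m$ is exactly the reflected $x$-form the paper uses), and the right inequality via the two representations $x=\frac{\mu-x}{\mu-\nu}\nu+m\,\frac{x-\nu}{\mu-\nu}\,\frac{\mu}{m}=m\,\frac{\mu-x}{\mu-\nu}\,\frac{\nu}{m}+\frac{x-\nu}{\mu-\nu}\,\mu$ together with their analogues for $x/m$. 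The left half of your argument is sound.

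The genuine gap is precisely in the step you defer to ``coefficient book-keeping'': it does not close with the stated constants. After symmetrizing (replacing $x$ by $\nu+\mu-x$ and using the symmetry of $\mathcal{W}$, which swaps $h\left(\frac{\mu-x}{\mu-\nu}\right)\leftrightarrow h\left(\frac{x-\nu}{\mu-\nu}\right)$), the values $\mathcal{G}\left(\frac{\nu}{m}\right),\mathcal{G}\left(\frac{\mu}{m}\right)$ are fed \emph{twice} --- once from expanding $\mathcal{G}(x)$ with factor $m^{\alpha}$ and once from expanding $m^{\alpha}\mathcal{G}\left(\frac{x}{m}\right)$ with inner factor $1$ --- while $\mathcal{G}(\nu),\mathcal{G}(\mu)$ are fed once and $\mathcal{G}\left(\frac{\nu}{m^{2}}\right),\mathcal{G}\left(\frac{\mu}{m^{2}}\right)$ arrive with factor $m^{2\alpha}$. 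A faithful execution of your four expansions therefore yields
\begin{equation*}
{}_{\nu}\mathcal{I}_{\mu}^{(\alpha)}\frac{\mathcal{G}(x)+m^{\alpha}\mathcal{G}\left(\frac{x}{m}\right)}{2^{\alpha}}\mathcal{W}(x)\leq\left(\frac{1}{8}\right)^{\alpha}{}_{\nu}\mathcal{I}_{\mu}^{(\alpha)}\mathcal{W}(x)\left[\mathcal{G}(\nu)+\mathcal{G}(\mu)+2^{\alpha}m^{\alpha}\left(\mathcal{G}\left(\frac{\nu}{m}\right)+\mathcal{G}\left(\frac{\mu}{m}\right)\right)+m^{2\alpha}\left(\mathcal{G}\left(\frac{\nu}{m^{2}}\right)+\mathcal{G}\left(\frac{\mu}{m^{2}}\right)\right)\right]\left[h\left(\frac{\mu-x}{\mu-\nu}\right)+h\left(\frac{x-\nu}{\mu-\nu}\right)\right],
\end{equation*}
i.e.\ the Fej\'er analogue of the coefficient pattern $1,\,2^{\alpha}m^{\alpha},\,m^{2\alpha}$ in Du et al.'s H-H theorem quoted in the preliminaries. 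This cannot be regrouped into the claimed $(1/6)^{\alpha}$ bound with uniform weight $m^{\alpha}$: the honest coefficient $\frac{m^{\alpha}}{4}$ on $\mathcal{G}\left(\frac{\nu}{m}\right)+\mathcal{G}\left(\frac{\mu}{m}\right)$ strictly exceeds the claimed $\frac{m^{\alpha}}{6}$, and since all terms are nonnegative no term-wise weakening repairs the overshoot (only at $m=1$, where the six values coalesce, do both expressions agree and reduce to Theorem \ref{luo}). You should know that the paper's own proof stumbles at exactly this point: its penultimate display sums to the coefficients $1,\,2^{\alpha}m^{\alpha},\,m^{2\alpha}$ above, and its prefactor accounting is already inconsistent (the four doubled integrals total $8$ copies of the middle quantity against a $(1/6)^{\alpha}$ prefactor), yet the final line silently replaces everything by the uniform $m^{\alpha}$ of inequality (\ref{feje}). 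So your plan reproduces the paper's argument including its defect; carried out correctly, it proves the displayed $(1/8)^{\alpha}$-type bound, not inequality (\ref{feje}) as stated.
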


\begin{proof}
Using the generalized $(h-m)$ convexity of $\mathcal{G}$ and the symmetry of $\mathcal{W}$, we obtain
	\begin{footnotesize}
\begin{flalign*}
 \frac{\mathcal{G}\left(\frac{\nu+\mu}{2}\right) { }_{\nu} \mathcal{I}_{\mu}^{(\alpha)} \mathcal{W}(x)}{2^{\alpha} h\left(\frac{1}{2}\right)}&=\frac{ { }_{\nu} \mathcal{I}_{\mu}^{(\alpha)} \mathcal{G}\left(\frac{\nu+\mu-x}{2}+\frac{m}{2}+\frac{x}{m}\right) \mathcal{W}(x)}{2^{\alpha}h\left(\frac{1}{2}\right)}\nonumber\\&\leq \frac{ { }_{\nu} \mathcal{I}_{\mu}^{(\alpha)}h\left(\frac{1}{2}\right) [\mathcal{G}(\nu+\mu-x)+{m}^{\alpha}\mathcal{G}(\frac{x}{m})]\mathcal{W}(x)}{2^{\alpha}h\left(\frac{1}{2}\right)}\nonumber\\&=\bigg(\frac{1}{2}\bigg)^{\alpha}{ }_{\nu} \mathcal{I}_{\mu}^{(\alpha)} \bigg[\mathcal{G}(\nu+\mu-x)+{m}^{\alpha}\mathcal{G}\bigg(\frac{x}{m}\bigg)\bigg]\mathcal{W}(x)\nonumber\\&=\bigg(\frac{1}{2}\bigg)^{\alpha}{ }_{\nu} \mathcal{I}_{\mu}^{(\alpha)} \bigg[\mathcal{G}(\nu+\mu-x)\mathcal{W}(\nu+\mu-x)+{m}^{\alpha}\mathcal{G}\bigg(\frac{x}{m}\bigg)\mathcal{W}(x)\bigg]\nonumber\\&=\bigg(\frac{1}{2}\bigg)^{\alpha}{ }_{\nu} \mathcal{I}_{\mu}^{(\alpha)} \bigg[\mathcal{G}(x)\mathcal{W}(x)+{m}^{\alpha}\mathcal{G}\bigg(\frac{x}{m}\bigg)\mathcal{W}(x)\bigg]\nonumber\\&=\bigg(\frac{1}{2}\bigg)^{\alpha}{ }_{\nu} \mathcal{I}_{\mu}^{(\alpha)} \bigg[\mathcal{G}(x)+m^{\alpha}\mathcal{G}\bigg(\frac{x}{m}\bigg)\bigg]\mathcal{W}(x),
	\end{flalign*}
\end{footnotesize}
which is the first part of inequality (\ref{feje}).

To show the second part of inequality (\ref{feje}), we have
	\begin{footnotesize}
	\begin{flalign*}
 &{ }_{\nu} \mathcal{I}_{\mu}^{(\alpha)} \frac{\mathcal{G}(x)+m^{\alpha} \mathcal{G}\left(\frac{x}{m}\right)}{2^{\alpha}} \mathcal{W}(x)=\bigg(\frac{1}{2}\bigg)^{\alpha}\bigg[\mathcal{I}_{\mu}^{\alpha} \mathcal{G}(x) \mathcal{W}(x)+\mathcal{G}(\nu+\mu-x) \mathcal{W}(\nu+\mu-x)\nonumber\\&+m^{\alpha} \mathcal{G}\bigg(\frac{x}{m}\bigg) \mathcal{W}(x)+m^{\alpha} \mathcal{G}\bigg(\frac{\nu+\mu-x}{m}\bigg) \mathcal{W}(\nu+\mu-x)\bigg]
=\left(\frac{1}{6}\right)^{\alpha}\left\{\nu \mathcal{I}_{\mu}^{(\alpha)}\left[\mathcal{G}\left(\frac{\mu-x}{\mu-\nu} \nu+m \frac{x-\nu}{\mu-\nu} \cdot\frac{\mu}{m}\right)\right.\right.\\ &\left.+\mathcal{G}\left(m \frac{\mu-x}{\mu-\nu} \frac{\nu}{m}+\frac{\mu-\nu}{\mu-\nu} \mu\right)\right] \mathcal{W}(\mu) + \nu \mathcal{I}_{\mu}^{(\alpha)}\left[h\left(\frac{x-\nu}{\mu-\nu} \nu+m \frac{\mu-x}{\mu-\nu} \frac{\mu}{m}\right)\right.\\ &\left.+\mathcal{G}\left(m \frac{x-\nu}{\mu-\nu} \frac{\nu}{m}+\frac{\mu-x}{\mu-\nu} \nu\right)\right] w(\mu) +{ }_{\nu} \mathcal{I}_{\mu}^{(\alpha)}\left[m^{\alpha} \mathcal{G}\left(\frac{\mu-x}{\mu-\nu} \frac{\nu}{m}+m \frac{x-\nu}{\mu-\nu} \frac{\mu}{m^{2}}\right)\right.\\ &\left.+m^{\alpha} \mathcal{G}\left(m \frac{\mu-x}{\mu-\nu} \frac{\nu}{m^{2}}+\frac{x-\nu}{\mu-\nu} \frac{\mu}{m}\right)\right] \mathcal{W}(x) +{ }_{\nu} \mathcal{I}_{\mu}^{(\alpha)}\left[m^{\alpha} \mathcal{G}\left(\frac{x-\nu}{\mu-\nu} \frac{\nu}{m}+m \frac{\mu-x}{\mu-\nu} \frac{\mu}{m^{2}}\right)\right.\\ &\left.\left.+m^{\alpha} \mathcal{G}\left(m \frac{x-\nu}{\mu-\nu} \frac{\nu}{m^{2}}+\frac{\mu-x}{\mu-\nu} \frac{\mu}{m}\right)\right] \mathcal{W}(x)\right\}
\\& \leq\bigg(\frac{1}{6}\bigg)^{\alpha}\bigg[{}_{\nu}\mathcal{I}_{\mu}^{(\alpha)}[h(\frac{\mu-x}{\mu-\nu}) \mathcal{G}(\nu)
+m^{\alpha}h(\frac{x-\nu}{\mu-\nu}) \mathcal{G}(\frac{\mu}{m})+m^{\alpha}h(\frac{\mu-x}{\mu-\nu}) \mathcal{G}(\frac{\nu}{m})+h(\frac{x-\nu}{\mu-\nu}) \mathcal{G}(\mu)] \mathcal{W}(x)
\\&+{ }_{\nu} \mathcal{I}_{\mu}^{(\alpha)}[h(\frac{x-\nu}{\mu-\nu}) \mathcal{G}(\nu)+m^{\alpha}h(\frac{\mu-x}{\mu-\nu})
\mathcal{G}(\frac{\mu}{m})+m^{\alpha}h(\frac{x-\nu}{\mu-\nu}) \mathcal{G}(\frac{\nu}{m})]
+(\frac{\mu-x}{\mu-\nu}) \mathcal{G}(\mu) \mathcal{W}(x)]
\\&+{ }_{\nu} \mathcal{I}_{\mu}^{(\alpha)}[m^{\alpha}\bigg(h((\frac{\mu-x}{\mu-\nu}) \mathcal{G}(\frac{\nu}{m}))
+m^{\alpha}h(\frac{x-\nu}{\mu-\nu}) \mathcal{G}(\frac{\mu}{m^{2}})+m^{\alpha}h(\frac{\mu-x}{\mu-\nu})
\mathcal{G}(\frac{\nu}{m^{2}})+h(\frac{x-\nu}{\mu-\nu}) \mathcal{G}(\frac{\mu}{m})\bigg)] \mathcal{W}(x)
\\&+{ }_{\nu} \mathcal{I}_{\mu}^{(\alpha)}\bigg[m^{\alpha}\bigg((h(\frac{x-\nu}{\mu-\nu}) \mathcal{G}(\frac{\nu}{m}))
+m^{\alpha}h(\frac{\mu-x}{\mu-\nu}) \mathcal{G}(\frac{\mu}{m^{2}})+m^{\alpha}h(\frac{x-\nu}{\mu-\nu})
\mathcal{G}(\frac{\nu}{m^{2}})+h(\frac{\mu-x}{\mu-\nu})\mathcal{G}(\frac{\mu}{m})\bigg) \mathcal{W}(x)\bigg]\\&=\bigg(\frac{1}{6}\bigg)^{\alpha}{ }_{\nu} \mathcal{I}_{\mu}^{(\alpha)}	\mathcal{W}(x)\bigg[\mathcal{G}(\nu)+\mathcal{G}(\mu)+ m^{\alpha}\left(\mathcal{G}\left(\frac{\nu}{m}\right)+\mathcal{G}\left(\frac{\mu}{m}\right)+\mathcal{G}\left(\frac{\nu}{m^{2}}\right)+\mathcal{G}\left(\frac{\mu}{m^{2}}\right)\right)\bigg]\\&\times\left[h\left(\frac{\mu-x}{\mu-\nu}\right)+h\left(\frac{x-\nu}{\mu-\nu}\right)\right].
	\end{flalign*}
\end{footnotesize}
\end{proof}

\begin{remark}
 Choosing \(m=1\) in Theorem \ref{fejeresult}, we have
		Theorem \ref{luo} established by Luo et al. \cite{Luo(2020)}.

\end{remark}

Using the similar technique for the proof of Theorem 4.1 given in \cite{Luo(2020)}, we studied the local Fej\'er-H-H type inequality for the differentiable generalized $(h-m)$-convex as follows.
\begin{theorem}\label{luqpartthe}
	 Suppose that \(\mathcal{G}: I \subseteq \mathbb{R} \rightarrow \mathbb{R}^{\alpha}\) is local continuous on \(I^{\circ}\). Let \(\mathcal{W}:[\nu, \mu] \rightarrow \mathbb{R}^{\alpha}, \mathcal{W} \geq 0^{\alpha}\) be a  
	 symmetric to \(\frac{\nu+\mu}{2}\) and local continuous. For \(q \geq 1\), if the mapping
	\(\left|\mathcal{G}^{(\alpha)}\right|^{q}\)  is generalized $(h-m)$-convex on \([\nu, \mu],\) then we have

	\begin{flalign}
&\bigg|\frac{\mathcal{G}(\nu)+\mathcal{G}(\mu)}{2^{\alpha}} { }_\nu \mathcal{I}_{\mu}^{(\alpha)} \mathcal{W}(x)-{ }_{\nu} \mathcal{I}_{\mu}^{(\alpha)} \mathcal{W}(x) \mathcal{G}(x)\bigg|\nonumber \\& \leq\left(\frac{(\mu-\nu)^{2}}{4}\right)^{\alpha} \frac{\|\mathcal{W}\|_{\infty}}{\Gamma(\alpha+1)}\left(\frac{\Gamma(\alpha+1)}{\Gamma(2 \alpha+1)}\right)^{1-\frac{1}{q}} \nonumber\\& \times\left(\left[\frac{\left|\mathcal{G}^{(\alpha)}(\mu)\right|^{q}}{\Gamma(\alpha+1)} \int_{0}^{1} \gamma^{\alpha} h(\gamma)(d\gamma)^{\alpha}+\frac{\left|\mathcal{G}^{(\alpha)}\left(\frac{\nu+\mu}{2m}\right)\right|^{q}}{\Gamma(\alpha+1)} \int_{0}^{1} \gamma^{\alpha}m^{\alpha} h(1-\gamma)(d \gamma)^{\alpha}\right]^{\frac{1}{q}}\right.\nonumber \\ &   +\left[\quad\left[\frac{\left.\mathcal{G}^{(\alpha)}(\nu)\right|^{q}}{\Gamma(\alpha+1)} \int_{0}^{1} \gamma^{\alpha} h(\gamma)(d\gamma)^{\alpha}+\frac{\left|\mathcal{G}^{(\alpha)}\left(\frac{\nu+\mu}{2m}\right)\right|^{q}}{\Gamma(\alpha+1)} \int_{0}^{1} \gamma^{\alpha}m^{\alpha} h(1-\gamma)(d \gamma)^{\alpha}\right]^{\frac{1}{q}}\right),\label{luqpart}
	\end{flalign}
	
	where \(\|\mathcal{W}\|_{\infty}=\sup _{\gamma \in[\nu, \mu]} \mathcal{W}(\gamma)\).
\end{theorem}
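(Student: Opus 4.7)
My plan is to apply Lemma \ref{luolemma} (Luo's identity) and then successively bound the resulting expression using the generalized power-mean inequality and the assumed generalized $(h-m)$-convexity of $|\mathcal{G}^{(\alpha)}|^q$. First I would take absolute values on both sides of the identity in Lemma \ref{luolemma}. The inner integral $\frac{1}{\Gamma(1+\alpha)}\int_{m(\gamma)}^{n(\gamma)}\mathcal{W}(x)(dx)^\alpha$ is bounded above by $\|\mathcal{W}\|_\infty \frac{(n(\gamma)-m(\gamma))^\alpha}{\Gamma(1+\alpha)} = \|\mathcal{W}\|_\infty \frac{(\gamma(\mu-\nu))^\alpha}{\Gamma(1+\alpha)}$, since a direct computation gives $n(\gamma)-m(\gamma) = \gamma(\mu-\nu)$. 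The triangle inequality then splits the outer $(d\gamma)^\alpha$-integral into two separate pieces, one involving $|\mathcal{G}^{(\alpha)}(n(\gamma))|$ and one involving $|\mathcal{G}^{(\alpha)}(m(\gamma))|$.

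Next I would apply the generalized power-mean inequality (a consequence of the local fractional H\"older inequality stated earlier in the paper) to each of the two pieces, with weight $\gamma^\alpha$. This produces the common factor $\left(\frac{1}{\Gamma(1+\alpha)}\int_0^1 \gamma^\alpha (d\gamma)^\alpha\right)^{1-1/q} = \left(\frac{\Gamma(1+\alpha)}{\Gamma(1+2\alpha)}\right)^{1-1/q}$, which matches the factor appearing in the stated bound. Then I would invoke generalized $(h-m)$-convexity of $|\mathcal{G}^{(\alpha)}|^q$. The key observation is that $n(\gamma) = \gamma\mu + m(1-\gamma)\cdot\frac{\nu+\mu}{2m}$ and $m(\gamma) = \gamma\nu + m(1-\gamma)\cdot\frac{\nu+\mu}{2m}$, so Definition \ref{defmh} yields
\begin{equation*}
|\mathcal{G}^{(\alpha)}(n(\gamma))|^q \leq h(\gamma)|\mathcal{G}^{(\alpha)}(\mu)|^q + m^\alpha h(1-\gamma)\left|\mathcal{G}^{(\alpha)}\left(\tfrac{\nu+\mu}{2m}\right)\right|^q,
\end{equation*}
together with the analogous bound for $|\mathcal{G}^{(\alpha)}(m(\gamma))|^q$ obtained by replacing $\mu$ with $\nu$. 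Integrating each bound against $\gamma^\alpha$ with respect to $(d\gamma)^\alpha$ produces precisely the two bracketed $\left[\,\cdot\,\right]^{1/q}$ expressions on the right-hand side of inequality (\ref{luqpart}).

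The main obstacle I expect is keeping the fractional prefactors straight: the identity in Lemma \ref{luolemma} already carries $\left(\frac{\mu-\nu}{4}\right)^\alpha / \Gamma(1+\alpha)$, and the power-mean step introduces an extra $(\mu-\nu)^\alpha$ through the bound on the inner integral, so these must combine cleanly into the $\left(\frac{(\mu-\nu)^2}{4}\right)^\alpha \frac{\|\mathcal{W}\|_\infty}{\Gamma(\alpha+1)}$ prefactor shown in the statement. A secondary subtlety is that the point $\frac{\nu+\mu}{2m}$ must lie in the domain of $\mathcal{G}$ for the $(h-m)$-convex bound to apply at that argument; apart from this domain check and the constant bookkeeping, the argument reduces to the three-step template: Lemma \ref{luolemma} identity, power-mean inequality, then termwise $(h-m)$-convex substitution.
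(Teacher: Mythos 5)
Your proposal follows the paper's own proof essentially step for step: the paper likewise takes absolute values in the identity of Lemma \ref{luolemma}, bounds the inner integral using $\|\mathcal{W}\|_{\infty}$ and $n(\gamma)-m(\gamma)=\gamma(\mu-\nu)$ to produce the $\left(\frac{(\mu-\nu)^{2}}{4}\right)^{\alpha}$ prefactor, applies the power-mean consequence of the generalized H\"older inequality with weight $\gamma^{\alpha}$ to obtain $\left(\frac{\Gamma(1+\alpha)}{\Gamma(1+2\alpha)}\right)^{1-\frac{1}{q}}$, and then invokes generalized $(h-m)$-convexity of $|\mathcal{G}^{(\alpha)}|^{q}$ via exactly your decomposition $n(\gamma)=\gamma\mu+m(1-\gamma)\frac{\nu+\mu}{2m}$, $m(\gamma)=\gamma\nu+m(1-\gamma)\frac{\nu+\mu}{2m}$. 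Your argument is correct and coincides with the paper's, and your closing remark about the domain of $\frac{\nu+\mu}{2m}$ is a legitimate hypothesis check that the paper itself glosses over.
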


\begin{proof}
Applying generalized H\"older's inequality, Lemma \ref{luolemma} and generalized $(h-m)$-convexity, we have
\begin{equation*}
\begin{array}{l}\left|\frac{\mathcal{G}(\nu)+\mathcal{G}(\mu)}{2^{\alpha}} {}_{\nu} \mathcal{I}_{\mu}^{(\alpha)} \mathcal{W}(x)-{\nu}_{\mu}^{(\alpha)} \mathcal{W}(x) \mathcal{G}(x)\right| \\ \leq\left(\frac{\mu-\nu}{4}\right)^{\alpha} \frac{1}{\Gamma(\alpha+1)} \int_{0}^{1}\left|\frac{1}{\Gamma(1+\alpha)} \int_{m(\gamma)}^{n(\gamma)} \mathcal{W}(x)(d x)^{\alpha}\right|\left|\left(\mathcal{G}^{(\alpha)}(n(\gamma))-\mathcal{G}^{(\alpha)}(m(\gamma))\right)\right|(d \gamma)^{\alpha} \\ \leq\left(\frac{\mu-\nu}{4}\right)^{\alpha} \frac{1}{\Gamma(\alpha+1)} \int_{0}^{1}\left|\frac{1}{\Gamma(1+\alpha)} \int_{m(\gamma)}^{n(\gamma)} \mathcal{W}(x)(d x)^{\alpha}\right|\left[\left|\mathcal{G}^{(\alpha)}(n(\gamma))\right|+\left|\mathcal{G}^{(\alpha)}(m(\gamma))\right|\right](d \gamma)^{\alpha} \\ \leq\left(\frac{(\mu-\nu)^{2}}{4}\right)^{\alpha} \frac{\|\mathcal{W}\|_{\infty}}{\Gamma(1+\alpha)} \frac{1}{\Gamma(1+\alpha)} \int_{0}^{1} \gamma^{\alpha}\left[\left|\mathcal{G}^{(\alpha)}(n(\gamma))\right|+\left|\mathcal{G}^{(\alpha)}(m(\gamma))\right|\right](d\gamma)^{\alpha} \\ \leq\left(\frac{(\mu-\nu)^{2}}{4}\right)^{\alpha} \frac{\|\mathcal{W}\|_{\infty}}{\Gamma(\alpha+1)}\left(\frac{1}{\Gamma(1+\alpha)} \int_{0}^{1} \gamma^{\alpha}(\mathrm{d} \gamma)^{\alpha}\right)^{1-\frac{1}{q}}\\  \times\left\{\left[\frac{1}{\Gamma(\alpha+1)} \int_{0}^{1}\left(\gamma^{\alpha} h(\gamma)|\mathcal{G}(\alpha)(\mu)|^{q}+\gamma^{\alpha}m^{\alpha} h(1-\gamma)\left|\mathcal{G}(\alpha)\left(\frac{\nu+\mu}{2m}\right)\right|^{q}\right)(d \gamma)^{\alpha}\right]^{\frac{1}{q}}\right. \\ \left.+\left[\frac{1}{\Gamma(\alpha+1)} \int_{0}^{1}\left(\gamma^{\alpha} h(\gamma)\left|\mathcal{G}^{(\alpha)}(\nu)\right|^{q}+\gamma^{\alpha}m^{\alpha} h(1-\gamma)\left|\mathcal{G}^{(\alpha)}\left(\frac{\nu+\mu}{2m}\right)\right|^{q}\right)(d \gamma)^{\alpha}\right]^ \frac{1}{q}\right\},\end{array}
\end{equation*}

where $\frac{1}{\Gamma(1+\alpha)} \int_{0}^{1} \gamma^{\alpha}(d \gamma)^{\alpha}=\frac{\Gamma(1+\alpha)}{\Gamma(1+2 \alpha)}$.
\end{proof}

\begin{remark}
	Choosing $m=1$ in inequality (\ref{luqpart}) of Theorem \ref{luqpartthe}, we obtain inequality (4.2) of Theorem 4.1 established by Luo et al. \cite{Luo(2020)}.
\end{remark}

\begin{corollary}\label{corolaryapplication}
	Consider Theorem \ref{luqpartthe}, we have
	\begin{itemize}
		\item [1.]If $q=1$, we have
	\begin{equation*}
	\begin{array}{l}\bigg|\frac{\mathcal{G}(\nu)+\mathcal{G}(\mu)}{2^{\alpha}} {}_{\nu} \mathcal{I}_{\mu}^{(\alpha)} \mathcal{W}(x)-{}_{\nu} \mathcal{I}_{\mu}^{(\alpha)} \mathcal{W}(x) \mathcal{G}(x)\bigg|  \\ \leq\left(\frac{(\mu-\nu)^{2}}{4}\right)^{\alpha} \frac{\|\mathcal{W}\|_{\infty}}{\Gamma(1+\alpha)} {}_{0} \mathcal{I}_{1}^{(\alpha)}\left\{\gamma^{\alpha}\left[h(\gamma)\left(\left|\mathcal{G}^{(\alpha)}(\nu)\right|+\left|\mathcal{G}^{(\alpha)}(\mu)\right|\right)\right.\right. \\ \left.\left.\quad+2^{\alpha}m^{\alpha} h(1-\gamma)\left|\mathcal{G}^{(\alpha)}\left(\frac{\nu+\mu}{2m}\right)\right|\right]\right\}.\end{array}
	\end{equation*}

	\item [2.]If $h(\gamma)=\gamma^{\alpha}$ and $q=1$, we get	
	\begin{equation*}
	\begin{array}{l}\bigg|\frac{\mathcal{G}(\nu)+\mathcal{G}(\mu)}{2^{\alpha}} {}_{\nu} \mathcal{I}_{\mu}^{(\alpha)} \mathcal{W}(x)-{}_{\nu} \mathcal{I}_{\mu}^{(\alpha)} \mathcal{W}(x) \mathcal{G}(x)\bigg|  \\ \leq\left(\frac{(\mu-\nu)^{2}}{4}\right)^{\alpha} \frac{\|\mathcal{W}\|_{\infty}}{\Gamma(1+\alpha)}\left\{\frac{\Gamma(1+2 \alpha)}{\Gamma(1+3 \alpha)}\left[\left|\mathcal{G}^{(\alpha)}(\nu)\right|+\left|\mathcal{G}^{(\alpha)}(\mu)\right|\right]\right. \\ \left.\quad+2^{\alpha}m^{\alpha}\left(\frac{\Gamma(1+\alpha)}{\Gamma(1+2 \alpha)}-\frac{\Gamma(1+2 \alpha)}{\Gamma(1+3 \alpha)}\right)\left|\mathcal{G}^{(\alpha)}\left(\frac{\nu+\mu}{2m}\right)\right|\right\}.\end{array}
	\end{equation*}
	
		\item [3.]If $q=1$ and $h(\gamma)=\gamma^{\alpha s}$ where $s\in(0,1]$, we get
		\begin{equation*}
		\begin{array}{l}\bigg|\frac{\mathcal{G}(\nu)+\mathcal{G}(\mu)}{2^{\alpha}} {}_{\nu} \mathcal{I}_{\mu}^{(\alpha)} \mathcal{W}(x)-{}_{\nu} \mathcal{I}_{\mu}^{(\alpha)} \mathcal{W}(x) \mathcal{G}(x)\bigg|  \\ \leq\left(\frac{(\mu-\nu)^{2}}{4}\right)^{\alpha} \frac{\|\mathcal{W}\|_{\infty}}{\Gamma(1+\alpha)}\left(\frac{\Gamma(1+(s+1) \alpha)}{\Gamma(1+(s+2) \alpha)}\left[\left|\mathcal{G}^{(\alpha)}(\nu)\right|+\left|\mathcal{G}^{(\alpha)}(\mu)\right|\right]\right. \\ \left.\quad+2^{\alpha}m^{\alpha} B_{\alpha}(2, s+1)\left|\mathcal{G}(\alpha)\left(\frac{\nu+\mu}{2m}\right)\right|\right),\end{array}
		\end{equation*}

		where $B_{\alpha}(\nu, \mu)=\int_{0}^{1} \gamma^{(\nu-1) \alpha}(1-\gamma)^{(\mu-1) \alpha}(d \gamma)^{\alpha}$, for any $\nu, \mu \in [0,\infty)$.
		
	\end{itemize}	
\end{corollary}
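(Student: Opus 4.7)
The plan is to derive each of the three items directly from the master inequality (\ref{luqpart}) of Theorem \ref{luqpartthe} by specializing the parameters and then evaluating the resulting local-fractional integrals through the standard Yang identity
\begin{equation*}
\frac{1}{\Gamma(1+\alpha)}\int_{0}^{1}\gamma^{k\alpha}(d\gamma)^{\alpha}=\frac{\Gamma(1+k\alpha)}{\Gamma(1+(k+1)\alpha)},\qquad k\ge 0,
\end{equation*}
together with the fractal Beta identity $B_{\alpha}(\nu,\mu)=\int_{0}^{1}\gamma^{(\nu-1)\alpha}(1-\gamma)^{(\mu-1)\alpha}(d\gamma)^{\alpha}$ stated at the end of the corollary.

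For Part 1, I would set $q=1$ in (\ref{luqpart}). The exponent $1-\tfrac{1}{q}$ then collapses to $0$, so the factor $\bigl(\Gamma(\alpha+1)/\Gamma(2\alpha+1)\bigr)^{1-1/q}$ becomes $1^\alpha$, and each of the two outer brackets (which carry exponent $1/q$) loses its root. Adding the two brackets produces four summands: two copies of $\frac{1}{\Gamma(\alpha+1)}\int_{0}^{1}\gamma^{\alpha}h(\gamma)(d\gamma)^{\alpha}$ paired respectively with $|\mathcal{G}^{(\alpha)}(\nu)|$ and $|\mathcal{G}^{(\alpha)}(\mu)|$, and two identical copies of $\frac{m^\alpha}{\Gamma(\alpha+1)}\int_{0}^{1}\gamma^{\alpha}h(1-\gamma)(d\gamma)^{\alpha}$ paired with $|\mathcal{G}^{(\alpha)}((\nu+\mu)/(2m))|$. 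Pulling the common $\gamma^{\alpha}$ inside the integral and collecting in the ${}_{0}\mathcal{I}_{1}^{(\alpha)}$ notation yields the claimed form, with the pair of identical copies producing the factor $2^{\alpha}$ multiplying the midpoint derivative term.

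For Part 2, I would start from Part 1 with $h(\gamma)=\gamma^{\alpha}$. Then $\frac{1}{\Gamma(\alpha+1)}\int_{0}^{1}\gamma^{\alpha}h(\gamma)(d\gamma)^{\alpha}=\frac{1}{\Gamma(\alpha+1)}\int_{0}^{1}\gamma^{2\alpha}(d\gamma)^{\alpha}=\frac{\Gamma(1+2\alpha)}{\Gamma(1+3\alpha)}$ by the Yang identity, which is precisely the coefficient of $|\mathcal{G}^{(\alpha)}(\nu)|+|\mathcal{G}^{(\alpha)}(\mu)|$ in the stated bound. For the midpoint term I would split $\gamma^{\alpha}h(1-\gamma)=\gamma^{\alpha}(1-\gamma)^{\alpha}=\gamma^{\alpha}-\gamma^{2\alpha}$ (using the fractal binomial identity coming from the reality of these integrands on $[0,1]$), so
\begin{equation*}
\frac{1}{\Gamma(\alpha+1)}\int_{0}^{1}\gamma^{\alpha}(1-\gamma)^{\alpha}(d\gamma)^{\alpha}=\frac{\Gamma(1+\alpha)}{\Gamma(1+2\alpha)}-\frac{\Gamma(1+2\alpha)}{\Gamma(1+3\alpha)},
\end{equation*}
which matches the asserted coefficient of $|\mathcal{G}^{(\alpha)}((\nu+\mu)/(2m))|$ after multiplication by $2^{\alpha}m^{\alpha}$. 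For Part 3 the recipe is identical with $h(\gamma)=\gamma^{\alpha s}$: the first integral becomes $\frac{1}{\Gamma(\alpha+1)}\int_{0}^{1}\gamma^{(s+1)\alpha}(d\gamma)^{\alpha}=\frac{\Gamma(1+(s+1)\alpha)}{\Gamma(1+(s+2)\alpha)}$, and the second is $\int_{0}^{1}\gamma^{\alpha}(1-\gamma)^{s\alpha}(d\gamma)^{\alpha}=B_{\alpha}(2,s+1)$ by the definition of the fractal Beta function recorded in the corollary.

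The one point requiring care — and really the only obstacle — is bookkeeping the $\Gamma(1+\alpha)$ factors: inequality (\ref{luqpart}) carries one $1/\Gamma(\alpha+1)$ outside and additional ones inside each bracket and inside the Hölder-type exponent, so I must verify that after setting $q=1$ these Gamma factors combine correctly with the Yang evaluation so that the ratios in Parts 2 and 3 appear without stray $\Gamma(1+\alpha)$ residues. Once that bookkeeping is executed, each of the three items drops out immediately; no new analytic input beyond Theorem \ref{luqpartthe} and the two evaluation identities above is required.
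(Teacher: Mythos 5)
Your proposal is correct and takes essentially the same route as the paper, which states this corollary without a separate proof precisely because it follows from Theorem \ref{luqpartthe} by the specializations you describe: setting $q=1$ so the H\"older factor collapses, merging the two brackets (with the fractal-arithmetic doubling $x^{\alpha}+x^{\alpha}=2^{\alpha}x^{\alpha}$ giving the $2^{\alpha}$ on the midpoint term), and evaluating via the Yang identity, the fractal splitting $(1-\gamma)^{\alpha}=1^{\alpha}-\gamma^{\alpha}$, and the fractal Beta function. The one $\Gamma(1+\alpha)$-bookkeeping issue you flagged is real but lies in the paper's statement rather than in your argument: consistency with items 1 and 2 would require the item-3 midpoint coefficient to read $B_{\alpha}(2,s+1)/\Gamma(1+\alpha)$ (or, equivalently, $B_{\alpha}$ to be defined with the $1/\Gamma(1+\alpha)$ normalization), a minor typo your derivation implicitly corrects.
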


\section{Applications}
\subsection{Random variables}
Suppose that \(\mathrm{X}\) is a random variable. Let \(p:[\nu, \mu] \rightarrow \mathbb{R}^{\alpha}\) be the generalized probability distribution mapping for all \(\gamma \in[\nu, \mu]\). The function $p$ possesses the upper and lower bounds that is \(\alpha\)-type real numbers \(\varPsi, \varOmega\) with
\(0^{\alpha} \leq \varPsi \leq p(\gamma) \leq \varOmega \leq 1^{\alpha} .\) The generalized expectation and \(r\)-moment are respectively given as \cite{Erden(2016)}
\begin{equation*}
E^{\alpha}(X)=\frac{1}{\Gamma(1+\alpha)} \int_{\nu}^{\mu} \gamma^{\alpha} p(\gamma)(d \gamma)^{\alpha}
\end{equation*}
and
\begin{equation*}
E_{r}^{\alpha}(X)=\frac{1}{\Gamma(1+\alpha)} \int_{\nu}^{\mu} \gamma^{r \alpha} p(\gamma)(d \gamma)^{\alpha}, r \geq 0.
\end{equation*}

Suppose that \(\mathcal{W}(x) \in C_{\alpha}[\nu, \mu]\) is the generalized probability density mapping of \(X\) which is symmetric to \(\frac{\nu+\mu}{2}\), for $0<\nu<\mu$. If \(\mathcal{G}(x)=\)
\(x^{r \alpha}\) with $r \geq 1$ we obtain the result which is related to $r$-moment.

\begin{proposition}
	If we choose \(h(\gamma)=\gamma^{\alpha}\) in Corollary (\ref{corolaryapplication}), we get
	\begin{equation*}
	\begin{array}{l}\left|\frac{\nu^{r \alpha}+\mu^{r \alpha}}{2^{\alpha}}{ }_{\nu} \mathcal{I}_{\mu}^{(\alpha)} \mathcal{W}(x)-E_{r}^{\alpha}(X)\right| \\ \leq\left(\frac{(\mu-\nu)^{2}}{4}\right)^{\alpha} \frac{\Gamma(r \alpha+1)}{\Gamma((r-1)\alpha+1 )} \frac{\|\mathcal{W}\|_{\infty}}{\Gamma(1+\alpha)} \\ \quad \times\left(\frac{\Gamma(2 \alpha+1)}{\Gamma(3 \alpha+1)}\left[\nu^{\alpha(r-1) }+\mu^{\alpha(r-1) }\right]\right. \\ \left.\quad+2^{\alpha}m^{\alpha}\left(-\frac{\Gamma(2 \alpha+1)}{\Gamma(1+3 \alpha)}+\frac{\Gamma(1+\alpha)}{\Gamma(2 \alpha+1)}\right)\left(\frac{\nu+\mu}{2m}\right)^{(r-1) \alpha}\right).\end{array}
	\end{equation*}
\end{proposition}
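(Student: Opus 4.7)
The plan is to specialize item 2 of Corollary \ref{corolaryapplication} (the case $h(\gamma)=\gamma^{\alpha}$, $q=1$) to the test function $\mathcal{G}(x)=x^{r\alpha}$, using the hypothesis that $\mathcal{W}$ is the generalized probability density of $X$ on $[\nu,\mu]$ symmetric about $(\nu+\mu)/2$.

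First I would read off the three ingredients appearing in the corollary under this choice. The boundary term is immediate: $\mathcal{G}(\nu)+\mathcal{G}(\mu)=\nu^{r\alpha}+\mu^{r\alpha}$. The integral term on the left identifies with the $r$-moment, since by the definition of the local fractional integral
\begin{equation*}
{}_{\nu}\mathcal{I}_{\mu}^{(\alpha)}\mathcal{W}(x)\mathcal{G}(x)=\frac{1}{\Gamma(1+\alpha)}\int_{\nu}^{\mu}x^{r\alpha}\mathcal{W}(x)(dx)^{\alpha}=E_{r}^{\alpha}(X).
\end{equation*}
For the right-hand side I would invoke the standard power rule for the local fractional derivative on $\mathbb{R}^{\alpha}$, namely
\begin{equation*}
\mathcal{G}^{(\alpha)}(x)=\frac{\Gamma(r\alpha+1)}{\Gamma((r-1)\alpha+1)}\,x^{(r-1)\alpha},
\end{equation*}
which gives $|\mathcal{G}^{(\alpha)}(\nu)|=\frac{\Gamma(r\alpha+1)}{\Gamma((r-1)\alpha+1)}\nu^{(r-1)\alpha}$, similarly at $\mu$, and at the midpoint $\frac{\nu+\mu}{2m}$.

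Next, I would substitute these three evaluations into the inequality of item 2 of the corollary and factor out the common constant $\frac{\Gamma(r\alpha+1)}{\Gamma((r-1)\alpha+1)}$ from the bracketed expression. This yields precisely the right-hand side displayed in the proposition, with the two Gamma-ratio coefficients $\Gamma(2\alpha+1)/\Gamma(3\alpha+1)$ and $\Gamma(1+\alpha)/\Gamma(2\alpha+1)-\Gamma(2\alpha+1)/\Gamma(3\alpha+1)$ inherited verbatim from the corollary.

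The argument is essentially a direct substitution, so there is no real obstacle; the only point that needs care is justifying that $\mathcal{G}(x)=x^{r\alpha}$ is generalized $(h\!-\!m)$-convex with $h(\gamma)=\gamma^{\alpha}$ (so that the corollary applies), and checking the power-rule formula for $\mathcal{G}^{(\alpha)}$ on fractal sets. The former follows from the generalized convexity of $x^{r\alpha}$ for $r\geq 1$ on $(0,\infty)$ together with Proposition \ref{prop} for the trivial comparison $h(\gamma)=\gamma^{\alpha}$, while the latter is a standard identity in Yang's local fractional calculus. Once these are in place, the stated estimate is obtained by algebraic simplification.
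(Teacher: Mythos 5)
Your derivation is correct and coincides with the paper's own (implicit) proof: the proposition is obtained precisely by substituting \(\mathcal{G}(x)=x^{r\alpha}\) into item 2 of Corollary \ref{corolaryapplication} (the case \(h(\gamma)=\gamma^{\alpha}\), \(q=1\)), identifying \({}_{\nu}\mathcal{I}_{\mu}^{(\alpha)}\mathcal{W}(x)\mathcal{G}(x)=\frac{1}{\Gamma(1+\alpha)}\int_{\nu}^{\mu}x^{r\alpha}\mathcal{W}(x)(dx)^{\alpha}=E_{r}^{\alpha}(X)\), and applying the power rule \(\mathcal{G}^{(\alpha)}(x)=\frac{\Gamma(1+r\alpha)}{\Gamma(1+(r-1)\alpha)}x^{(r-1)\alpha}\) before factoring out the common Gamma-ratio constant. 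One correction to your closing remark: the hypothesis of Theorem \ref{luqpartthe} (and hence of the corollary) is that \(\left|\mathcal{G}^{(\alpha)}\right|^{q}\) — here \(\left|\mathcal{G}^{(\alpha)}\right|\), a constant multiple of \(x^{(r-1)\alpha}\) — be generalized \((h-m)\)-convex, not \(\mathcal{G}(x)=x^{r\alpha}\) itself, so your convexity check is aimed at the wrong function; the verification should concern \(x^{(r-1)\alpha}\) with \(h(\gamma)=\gamma^{\alpha}\) (a point the paper itself passes over in silence, and which is genuinely delicate for \(1\leq r<2\), where the classical analogue \(x^{r-1}\) is concave).
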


\subsection{Numerical integration}

Let \(X_{i}: \nu=x_{0}<x_{1}<\ldots<x_{i-1}<x_{i}=\mu\) be a partition of the
interval \([\nu, \mu], \varepsilon_{j} \in\left[x_{j}, x_{j+1}\right](j=0, \ldots, i-1) .\) We consider
the following trapezoidal quadrature formula

\begin{equation*}
\frac{1}{\Gamma(1+\alpha)} \int_{\nu}^{\mu} \mathcal{W}(\gamma) \mathcal{G}(\gamma)(d \gamma)^{\alpha}=T(\mathcal{G}, \mathcal{W}, \varepsilon)+R_{T}(\mathcal{G}, \mathcal{W}, \varepsilon),
\end{equation*}
where
\begin{equation*}
T(\mathcal{G}, \mathcal{W},\varepsilon):=\frac{1}{\Gamma(1+\alpha)} \sum_{j=0}^{n-1} \frac{\mathcal{G}\left(x_{j}\right)+\mathcal{G}\left(x_{j+1}\right)}{2^{\alpha}} \int_{x_{j}}^{x_{j+1}} \mathcal{W}(\gamma)(d \gamma)^{\alpha}.
\end{equation*}
Here  \(R_{T}(\mathcal{G}, \mathcal{W}, \varepsilon)\) is the related approximation
error of \(\frac{1}{\Gamma(1+\alpha)} \int_{\nu}^{\mu} \mathcal{G}(\gamma) \mathcal{W}(\gamma)(d \gamma)^{\alpha}.\) Therefore, we obtain the following result.

\begin{proposition}
	 Consider the assumptions of Theorem \ref{luqpartthe}. The weighted second part of Fej\'er-H-
	H error estimate is given as follows:
	
	\begin{equation}
	\begin{array}{l}\bigg|R_{M}(\mathcal{G}, \mathcal{W}, \varepsilon)\bigg| \\ \leq \sum_{j=0}^{i-1}\left(\frac{\left(x_{j+1}-x_{j}\right)^{2}}{4}\right)^{\alpha} \frac{\|\mathcal{W}\|_{\infty}}{\Gamma(1+\alpha)}\left\{\frac{\Gamma(1+2 \alpha)}{\Gamma(1+3 \alpha)}\right. \\ \quad \times\left[\left|\mathcal{G}^{(\alpha)}\left(x_{j}\right)\right|+\left|\mathcal{G}^{(\alpha)}\left(x_{j+1}\right)\right|\right]+2^{\alpha}m^{\alpha}\left(\frac{\Gamma(1+\alpha)}{\Gamma(1+2 \alpha)}-\frac{\Gamma(1+2 \alpha)}{\Gamma(1+3 \alpha)}\right) \\ \left.\quad \times\left|\mathcal{G}^{(\alpha)}\left(\frac{x_{j+1}+x_{j}}{2m}\right)\right|\right\}.\label{applicationnumurical}\end{array}
	\end{equation}
\end{proposition}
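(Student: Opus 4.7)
The plan is to obtain the estimate by applying Corollary \ref{corolaryapplication} (part 2, with $h(\gamma)=\gamma^{\alpha}$ and $q=1$) on every subinterval $[x_j,x_{j+1}]$ of the partition and then summing the resulting local bounds. I interpret $R_M(\mathcal{G},\mathcal{W},\varepsilon)$ here as the same remainder introduced just above the proposition (the writing ``$R_M$'' appears to be a typographical variant of $R_T$), i.e.\ the defect between the local fractional integral $\tfrac{1}{\Gamma(1+\alpha)}\int_{\nu}^{\mu}\mathcal{W}(\gamma)\mathcal{G}(\gamma)(d\gamma)^{\alpha}$ and the weighted trapezoidal sum $T(\mathcal{G},\mathcal{W},\varepsilon)$.

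First, I would decompose the total integral along the partition:
\begin{equation*}
\frac{1}{\Gamma(1+\alpha)}\int_{\nu}^{\mu}\mathcal{W}(\gamma)\mathcal{G}(\gamma)(d\gamma)^{\alpha}
=\sum_{j=0}^{i-1}{}_{x_j}\mathcal{I}_{x_{j+1}}^{(\alpha)}\mathcal{W}(\gamma)\mathcal{G}(\gamma),
\end{equation*}
so that by the definition of $T(\mathcal{G},\mathcal{W},\varepsilon)$ one has
\begin{equation*}
R_M(\mathcal{G},\mathcal{W},\varepsilon)
=\sum_{j=0}^{i-1}\left[{}_{x_j}\mathcal{I}_{x_{j+1}}^{(\alpha)}\mathcal{W}(\gamma)\mathcal{G}(\gamma)-\frac{\mathcal{G}(x_j)+\mathcal{G}(x_{j+1})}{2^{\alpha}}{}_{x_j}\mathcal{I}_{x_{j+1}}^{(\alpha)}\mathcal{W}(\gamma)\right].
\end{equation*}
The triangle inequality then reduces everything to controlling each summand separately.

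Next, I would invoke the hypotheses of Theorem \ref{luqpartthe} locally on $[x_j,x_{j+1}]$: the weight $\mathcal{W}$ is non-negative and (assumed) symmetric about the local midpoint $\tfrac{x_j+x_{j+1}}{2}$, and $|\mathcal{G}^{(\alpha)}|$ is generalized $(h\text{-}m)$-convex on $[x_j,x_{j+1}]$. Choosing $h(\gamma)=\gamma^{\alpha}$ and $q=1$ in Corollary \ref{corolaryapplication} (the second item) gives, for each $j$,
\begin{equation*}
\begin{aligned}
&\left|\frac{\mathcal{G}(x_j)+\mathcal{G}(x_{j+1})}{2^{\alpha}}{}_{x_j}\mathcal{I}_{x_{j+1}}^{(\alpha)}\mathcal{W}(\gamma)-{}_{x_j}\mathcal{I}_{x_{j+1}}^{(\alpha)}\mathcal{W}(\gamma)\mathcal{G}(\gamma)\right|\\
&\leq\left(\frac{(x_{j+1}-x_j)^{2}}{4}\right)^{\alpha}\frac{\|\mathcal{W}\|_{\infty}}{\Gamma(1+\alpha)}\Bigg\{\frac{\Gamma(1+2\alpha)}{\Gamma(1+3\alpha)}\bigl[|\mathcal{G}^{(\alpha)}(x_j)|+|\mathcal{G}^{(\alpha)}(x_{j+1})|\bigr]\\
&\qquad+2^{\alpha}m^{\alpha}\left(\frac{\Gamma(1+\alpha)}{\Gamma(1+2\alpha)}-\frac{\Gamma(1+2\alpha)}{\Gamma(1+3\alpha)}\right)\left|\mathcal{G}^{(\alpha)}\!\left(\frac{x_j+x_{j+1}}{2m}\right)\right|\Bigg\}.
\end{aligned}
\end{equation*}
Summing these $i$ estimates and using the triangle inequality produces exactly inequality \eqref{applicationnumurical}.

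The only real obstacle is bookkeeping: I would need to verify that the constant $\|\mathcal{W}\|_{\infty}$ used on each subinterval can be uniformly replaced by the global supremum (which is immediate since $\sup_{[x_j,x_{j+1}]}\mathcal{W}\leq\sup_{[\nu,\mu]}\mathcal{W}$) and that the local symmetry hypothesis on $\mathcal{W}$ is justified by invoking the standing assumptions of Theorem \ref{luqpartthe}. Once those points are noted, the argument is simply triangle inequality plus one application of the corollary per subinterval, so no delicate estimate is required.
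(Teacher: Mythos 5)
Your proposal matches the paper's proof essentially verbatim: the paper likewise applies Theorem \ref{luqpartthe} with $h(\gamma)=\gamma^{\alpha}$ and $q=1$ (i.e.\ the case recorded in Corollary \ref{corolaryapplication}, part 2) on each subinterval $[x_j,x_{j+1}]$ and then sums over $j=0,\ldots,i-1$, which is exactly your decomposition-plus-triangle-inequality argument. Your two bookkeeping remarks are sound, and indeed you are slightly more careful than the paper, which silently assumes the local symmetry of $\mathcal{W}$ about each midpoint $\tfrac{x_j+x_{j+1}}{2}$ without comment.
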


\begin{proof}
Applying Theorem \ref{luqpartthe}, $h(\gamma)=\gamma^{\alpha}$ and \(q=1\) on the interval
	\(\left[x_{j}, x_{j+1}\right],\) we have
	\begin{equation*}
	\begin{array}{l}\bigg| \frac{1}{\Gamma(1+\alpha)} \frac{\mathcal{G}\left(x_{j}\right)+\mathcal{G}\left(x_{j+1}\right)}{2^{\alpha}} \int_{x_{j}}^{x_{j+1}} \mathcal{W}(\gamma)(d \gamma)^{\alpha} \\ \quad-\frac{1}{\Gamma(1+\alpha)} \int_{x_{j}}^{x_{j+1}} \mathcal{W}(\gamma) \mathcal{G}(\gamma)(d \gamma)^{\alpha} \bigg| \\ \leq\left(\frac{\left(x_{j+1}-x_{j}\right)^{2}}{4}\right)^{\alpha} \frac{\|\mathcal{W}\|_{\infty}}{\Gamma(1+\alpha)}\left\{\frac{\Gamma(1+2 \alpha)}{\Gamma(1+3 \alpha)}\left[\left|\mathcal{G}^{(\alpha)}\left(x_{j}\right)\right|+\left|\mathcal{G}^{(\alpha)}\left(x_{j+1}\right)\right|\right]\right. \\ \left.\quad+2^{\alpha}m^{\alpha}\left(-\frac{\Gamma(2 \alpha+1)}{\Gamma(3 \alpha+1)}+\frac{\Gamma(\alpha+1)}{\Gamma(2 \alpha+1)}\right)\left|\mathcal{G}^{(\alpha)}\left(\frac{x_{j+1}+x_{j}}{2m}\right)\right|\right\}\end{array}
	\end{equation*}
	for all \(j=0, \ldots, i-1 .\) Summing over \(j\) from 0 to \(i-1,\) we have
	the inequality (\ref{applicationnumurical}).
\end{proof}

\end{document}